\newenvironment{proof}{\noindent {\em {Proof}}:}{$\square$
\medskip}
\newtheorem{theorem}{Theorem}[section]
\newtheorem{corollary}[theorem]{Corollary}
\newtheorem{lemma}[theorem]{Lemma}
\newtheorem{remark}[theorem]{Remark}
\newtheorem{proposition}[theorem]{Proposition}
\newtheorem{definition}[theorem]{Definition}
\newtheorem{example}[theorem]{Example}
\newtheorem{question}[theorem]{Question}
\newtheorem{problem}[theorem]{Problem}
\newcommand{\bd}[1]{\begin{definition}\rm\label{#1}}
\newcommand{\bt}[1]{\begin{theorem}\label{#1}}
\newcommand{\bc}[1]{\begin{corollary}\label{#1}}
\newcommand{\bl}[1]{\begin{lemma}\label{#1}}
\newcommand{\bp}[1]{\begin{proposition}\label{#1}}
\newcommand{\be}[1]{\begin{example}\rm\label{#1}}
\newcommand{\bq}[1]{\begin{question}\rm\label{#1}}
\newcommand{\bprob}[1]{\begin{problem}\rm\label{#1}}
\newcommand{\beq}[1]{\begin{eqnarray}\label{#1}}
\newcommand{\br}[1]{\begin{remark}\rm\label{#1}}
\newcommand{\el}{\end{lemma}}
\newcommand{\ep}{\end{proposition}}
\newcommand{\ee}{\end{example}}
\newcommand{\eq}{\end{question}}
\newcommand{\eprob}{\end{problem}}
\newcommand{\eeq}{\end{eqnarray}}
\newcommand{\ed}{\end{definition}}
\newcommand{\et}{\end{theorem}}
\newcommand{\ec}{\end{corollary}}
\newcommand{\er}{\end{remark}}
\begin{document}
\title{Discrete Galerkin Method for Fractional Integro-Differential Equations}
\author{ P. Mokhtary \\
\small{Department of Mathematics, Faculty of basic Sciences}\\
\small{Sahand University of Technology, Tabriz, Iran}\\
\footnotesize{E-Mails: mokhtary.payam@gmail.com \;\;
mokhtary@sut.ac.ir}} \maketitle


\begin{abstract}
In this paper, we develop a fully discrete Galerkin method for
solving initial value fractional integro-differential
equations(FIDEs). We consider Generalized Jacobi polynomials(GJPs)
with indexes corresponding to the number of homogeneous initial
conditions as natural basis functions for the approximate solution.
The fractional derivatives are used in the Caputo sense. The
numerical solvability of algebraic system obtained from
implementation of proposed method for a special case of FIDEs is
investigated. We also provide a suitable convergence analysis to
approximate solutions under a more general regularity assumption on
the exact solution.
\end{abstract}

{\bf Subject Classification:}{34A08; 65L60}

{\bf Keywords:} Fractional integro-differential equation(FIDE),
Galerkin Method, Generalized Jacobi Polynomials(GJPs), Caputo
derivative.

\maketitle

\vspace{-6pt}

\section{Introduction}
In this paper, we provide a convergent numerical scheme for solving
FIDE
\begin{equation}\label{1}
\left\{\begin{array}{l}
\mathcal D^q u(x)=p(x) u(x)+f(x)+\lambda \int\limits_0^x{K(x,t) u(t) dt},~~~ x \in \Omega=[0,1],\\
\\
u(0)=0,
\end{array}\right.
\end{equation}
where $q\in \mathbb R^+ \bigcap (0,1)$. The symbol $\mathbb R^+$ is
the collection of all positive real numbers.~$p(x)$ and $f(x)$ are
given continuous functions and $K(x,t)$ is a given sufficiently
smooth kernel function, $u(x)$ is the unknown function.

Note that the condition $u(0)=0$ is not restrictive, due to the fact
that (\ref{1}) with nonhomogeneous initial condition $u(0)=d,~~d
\neq 0$ can be converted to the following homogeneous FIDE
\begin{equation*}
\left\{\begin{array}{l}
\mathcal D^q \tilde u(x)=p(x) \tilde u(x)+\tilde f(x)+\lambda \int\limits_0^x{K(x,t) \tilde u(t) dt},~~~ x \in \Omega=[0,1],\\
\\
\tilde u(0)=0,
\end{array}\right.
\end{equation*}
by the simple transformation $\tilde u(x)=u(x)-d$, where $\tilde
f(x)=f(x)+d\bigg(p(x)+\lambda \int_{0}^{x}{K(x,t)dt}\bigg)$.

Such kind of equations arising in the mathematical modeling of
various physical phenomena, such as heat conduction, materials with
memory, combined conduction, convection and radiation
problems(\cite{r2}, \cite{r5}, \cite{r20}, \cite{r21}).

$\mathcal D^q u(x)$ denotes the fractional Caputo differential
operator of order $q$ and defines as(\cite{r8}, \cite{r13},
\cite{r22})
\begin{equation}
\label{2} \mathcal D^q u(x) = \mathcal I^{1-q} u'(x),
\end{equation}
where
\begin{equation}\label{3}
\mathcal I^\mu u(x)=\frac{1}{\Gamma{(\mu)}}
\int\limits_0^x{(x-s)^{\mu-1} u(s) ds},
\end{equation}
is the fractional integral operator from order $\mu$.
$\Gamma{(\mu)}$ is the well known Gamma function. The following
relation holds\cite{r8}
\begin{equation}\label{20}\mathcal I^q(\mathcal D^q
u(x))=u(x)-u(0).\end{equation}

From the relation above, it is easy to check that (\ref{1}) is
equivalent with the following weakly singular Volterra integral
equation
\begin{equation}\label{5}
u(x)=g(x)+\lambda \int\limits_0^x{{\bar K}(x,t) u(t) dt}.
\end{equation}

Here $g(x)=\mathcal I^q f(x)$ and ${\bar
K}(x,t)=\frac{(x-t)^{q-1}}{\Gamma{(q)}}p(t)+\int\limits_t^x{\frac{(x-s)^{q-1}}{\Gamma{(q)}}
K(s,t)ds}.$ From the well known existence and uniqueness
Theorems(\cite{r3}, \cite{r7}), it can be concluded that if the
following conditions are fulfilled
\begin{itemize}
    \item $f(x) \in C^l(\Omega),~~l \ge 1$
    \item $p(x) \in C^l(\Omega),~~l \ge 1$
    \item $K(x,t) \in C^l(D),~~ D=\{(x,t);0 \le t \le x\le
1\},~~l \ge1$
     \item $K(x,x)\neq 0$,
\end{itemize}
the regularity of the unique solution $u(x)$ of (\ref{5}) and also
(\ref{1}) is described by
\begin{equation}\label{6}
u(x)=\sum\limits_{(j,k)}{\gamma_{j,k} x^{j+kq}}+U_l(x;q) \in
C^l(0,1]\bigcap C(\Omega),\hspace{.5 cm} \text{with} \hspace{.5 cm}
|u'(x)| \le C_q x^{q-1},
\end{equation}
where the coefficients $\gamma_{j,k}$ are some constants, $U_l(.;q)
\in C^l(\Omega)$ and $(j,k):=\{(j,k):~~j,k \in \mathbb
N_0,~j+kq<l\}$. Here $\mathbb N_0=\mathbb N \bigcup \{0\}$, where
the symbol $\mathbb N$ denotes the collection of all natural
numbers. Thus, we must expect the first derivative of the solution
to has a discontinuity at the origin. More precisely, if the given
functions $g(x), p(x)$ and $K(x,t)$ are real analytic in their
domains then it can be concluded that there is a function
$U=U(z_1,z_2)$ real and analytic at $(0,0)$, so that solutions of
(\ref{5})and also (\ref{1}) can be written as
$u(x)=U(x,x^q)$(\cite{r3}, \cite{r7}).

Recently, several numerical methods for the numerical solution of
FIDE's have been proposed. In \cite{r19}, fractional differential
transform method was developed to solve FIDE's with nonlocal
boundary conditions. In \cite{r23}, Rawashdeh studied the numerical
solution of FIDE's by polynomial spline functions. In \cite{r1}, an
analytical solution for a class of FIDE's was proposed. Adomian
decomposition method to solve nonlinear FIDE's was proposed in
\cite{r17}. In \cite{r25}, authors solved fractional nonlinear
Volterra integro differential equations using the second kind
Chebyshev wavelets. In \cite{r11}, Taylor expansion approach was
presented for solving a class of linear FIDE's including those of
Fredholm and Volterra types. In \cite{r16}, authors were solved
FIDE's by adopting Hybrid Collocation method to an equivalent
integral equation of convolution type. In \cite{r12}, Chebyshev
Pseudospectral method was implemented to solve linear and nonlinear
system of FIDE's. In \cite{r15}, authors proposed an analyzed
spectral Jacobi Collocation method for the numerical solution of
general linear FIDE's. In \cite{r9}, authors applied Collocation
method to solve the nonlinear FIDE's. In \cite{r18}, Mokhtary and
Ghoreishi, proved the $L^2$ convergence of Legendre Tau method for
the numerical solution of nonlinear FIDE's.

Many of the techniques mentioned above or have not proper
convergence analysis or if any, very restrictive conditions
including smoothness of the exact solution are assumed. In this
paper we will consider non smooth solutions of (\ref{1}). In this
case although the discrete Galerkin method can be implemented
directly but this method leads to very poor numerical results. Thus
it is necessary to introduce a regularization procedure that allows
us to improve the smoothness of the given functions and then to
approximate the solution with a satisfactory order of convergence.
To this end, we propose a regularization process which the original
equation (\ref{1}) will be changed into a new equation which
possesses a more regularity properties by taking a suitable
coordinate transformation. Our logic in choosing proper
transformation is based upon the formal asymptotic expansion of the
exact solution in (\ref{6}). Consider (\ref{1}), using the variable
transformation
\begin{equation}\label{6xx}
x=v^{\frac{1}{q}},\;\; v=x^{q},\;\; t=w^{\frac{1}{q}},\;\; w=t^q,
\end{equation}
we can change (\ref{1}) to the following equation
\begin{equation}\label{6x}
\mathcal M^q \bar u(v)=\bar p(v) \bar u(v)+\bar
f(v)+\lambda\int\limits_0^v{\tilde{K}(v,w) \bar{u}(w)dw},
\end{equation}
where
\begin{eqnarray}\label{rv4}
\nonumber\bar p(v)&=&p(v^{\frac{1}{q}}),\;\; \bar
f(v)=f(v^{\frac{1}{q}}),~~{\tilde
K}(v,w)=\frac{w^{{\frac{1}{q}}-1}}{q}
K(v^{\frac{1}{q}},w^{\frac{1}{q}}).\\
\mathcal M^q \bar
u(v)&=&\frac{1}{\Gamma{(1-q)}}\int\limits_0^v{(v^{\frac{1}{q}}-w^{\frac{1}{q}})^{-q}
{\bar u}'(w)dw}.
\end{eqnarray}

From (\ref{6}), the exact solution $\bar u(v)$ can be written as
$\bar{u}(v)=u(v^{\frac{1}{q}})=\sum\limits_{(j,k)}{\gamma_{j,k}
v^{\frac{j}{q}+k}}+U_l(v^{\frac{1}{q}};q)$. It can be easily seen
that $\bar u'(v) \in C(\Omega)$. It is trivial that for
$q=\frac{1}{n},~n \in \mathbb N$, the unknown function $\bar u(v)$
will be in the form
\[
\bar{u}(v)=u(v^n)=\sum\limits_{(j,k)}{\gamma_{j,k}
v^{nj+k}}+U_l(v^n;q), \quad n \in \mathbb N,
\]
which is infinitely smooth. Then we can deduce that the solution
$\bar u(v)$ of the new equation (\ref{6x}) possesses better
regularity and discrete Galerkin theory can be applied conveniently
to obtain high order accuracy.

In the sequel, we introduce the discrete Galerkin solution $\bar
u_N(v)$ based upon GJPs to (\ref{6x}). Since the exact solutions of
(\ref{1}) can be written as $u(x)=\bar u(v)$ then we define
$u_N(x)=\bar u_N(v),\; x, v \in \Omega$ as the approximate solution
of (\ref{1}).

Spectral Galerkin method is one of the weighted residual
methods(WRM), in which approximations are defined in terms of
truncated series expansions, such that residual which should be
exactly equal to zero, is forced to be zero only in an approximate
sense. It is well known that, in this method, the expansion
functions must satisfy in the supplementary conditions. The two main
characteristics behind the approach are that, first it reduces the
given problems to those of solving a system of algebraic equations,
and in general converges exponentially and almost always supplies
the most terse representation of a smooth solution(\cite{a13},
\cite{a14}, \cite{aa26}).

In this article, we use shifted GJPs on $\Omega$, which are mutually
orthogonal with respect to the shifted weight function
$\delta^{\alpha,\beta}(v)=(2-2v)^\alpha(2v)^\beta$ on $\Omega$ where
$\alpha, \beta$ belong to one of the following index sets
\begin{eqnarray*}
{\mathcal N_1}&=&\{(\alpha,\beta); \alpha, \beta \le -1, ~\alpha,
\beta \in \mathbb Z\},\quad~~~~~~~~~~~~~~ {\mathcal
N_2}=\{(\alpha,\beta); \alpha \le -1, \beta > -1,~~\alpha \in
\mathbb Z, \beta \in \mathbb R\},
 \\
{\mathcal N_3}&=&\{(\alpha,\beta); \alpha>-1, \beta \le -1,~\alpha
\in \mathbb R, \beta \in \mathbb Z\},\quad {\mathcal
N_4}=\{(\alpha,\beta); \alpha, \beta
> -1,~\alpha, \beta \in \mathbb R\},
\end{eqnarray*}
where the symbol $\mathbb Z$ is the collection of all integer
numbers. The main advantage of GJPs is that these polynomials, with
indexes corresponding to the number of homogeneous initial
conditions in a given FIDE, are the natural basis functions to the
Galerkin approximation of this problem(\cite{a15}, \cite{a16}).

The organization of this paper is as follows: we begin by reviewing
some preliminaries which are required for establishing our results
in Section 2. In Section 3, we introduce the discrete Galerkin
method based on the GJPs and its application to (\ref{6x}).
Numerical solvability of the algebraic system obtained from discrete
Galerkin discretization of a special case of (\ref{6x}) with
$0<q<\frac{1}{2}$ and $\bar p(v)=1$ based on GJPs is given in
Section 4. Convergence analysis of the proposed scheme is provided
in Section 5. Numerical experiments are carried out in Section 6.
\section{Preliminaries and Notations}
In this section, we review the basic definitions and properties that
are required in the sequel.

Defining weighted inner product
\[ \Big(u_1,u_2\Big)_{\alpha, \beta}=\int_{\Omega}{u_1(v) u_2(v) \delta^{\alpha, \beta}(v)
dv},\] and discrete Jacobi-Gauss inner product
\[\bigg(u_1,u_2\bigg)_{N,\alpha,\beta}=\sum\limits_{k=0}^N{u_1(v_k^{\alpha,\beta}) u_2(v_k^{\alpha,\beta}) \delta_k^{\alpha,\beta}},\]
we recall the following norms over $\Omega$
\[
\|u\|_{\alpha,\beta}^2=\Big(u,u\Big)_{\alpha,\beta}, \quad
\|u\|_{N,\alpha,\beta}^2=\Big(u,u\Big)_{N,\alpha,\beta},\quad
\|u\|_{\infty}=\sup_{v \in \Omega} |u(v)|.\]

Here, $v_k^{\alpha,\beta}$ and $\delta_k^{\alpha,\beta}$ are the
shifted Jacobi Gauss quadrature nodal points on $\Omega$ and
corresponding weights respectively.

The non-uniformly Jacobi-weighted Sobolev space denotes by
$B_{\alpha,\beta}^{k}(\Omega)$ and defines as follows
\[
B_{\alpha , \beta}^{k}(\Omega)=\{ u: \|u^{(s)}\|_{\alpha+s,\beta+s}
< \infty;~~ 0 \le s\le k\},\] equipped with the norm and semi-norm
\[
||u||_{\alpha,\beta,k}^2=\sum\limits_{s = 0}^k
||u^{(s)}||_{\alpha+s,\beta+s}^2, \quad
|u|_{\alpha,\beta,k}=||u^{(k)}||_{\alpha+k,\beta+k}.
\]

The space $B_{\alpha,\beta}^{k}(\Omega)$ distinguishes itself from
the usual weighted Sobolev space $H_{\alpha,\beta}^{k}(\Omega)$ by
involving different weight functions for derivatives of different
orders. The usual weighted Sobolev space
$H_{\alpha,\beta}^{k}(\Omega)$ is defined as
\[
H_{\alpha , \beta}^{k}(\Omega)=\{ u: \|u^{(s)}\|_{\alpha,\beta} <
\infty;~~ 0 \le s\le k\},\] equipped with the norm
\[
||u||_{H_{\alpha,\beta}^{k}(\Omega)}^2=\sum\limits_{s = 0}^k
||u^{(s)}||_{\alpha,\beta}^2.
\]

We denote the shifted GJPs on $\Omega$ by $G_n^{\alpha,\beta}(v)$
and define as
\begin{equation}
\label{7} G_n^{\alpha,\beta}(v) = \left\{
\begin{array}{l}
 {(2 -2v)^{-\alpha}}{(2v)^{-\beta}}J_{n-n_0}^{-\alpha,-\beta}(v),~ (\alpha,\beta) \in {\mathcal N_1}, ~~n_0 = -(\alpha+\beta),\\
 \\
 {(2 -2v)^{-\alpha}}J_{n-n_0}^{-\alpha,\beta}(v), \quad \quad \quad ~~~\;\;(\alpha,\beta) \in {\mathcal N_2},~~ n_0 = -\alpha, \\
 \\
 {(2v)^{-\beta}}J_{n-n_0}^{\alpha,-\beta}(v), \quad \quad \quad ~~~\;\;(\alpha,\beta) \in {\mathcal N_3},~~ n_0 = -\beta,\\
 \\
 J_{n-n_0}^{\alpha,\beta}(v),\quad  \quad \quad \;\;\quad \quad \quad~~~~~~~ \;(\alpha,\beta) \in {\mathcal N_4},~~ n_0=0,\\
 \end{array} \right.
 \end{equation}
where $J_n^{\alpha,\beta}(v)$ is the classical shifted Jacobi
polynomials on $\Omega$; see \cite{aa26}. An important fact is that
the shifted GJPs $\{G_n^{\alpha,\beta}(v); n \ge 1 \}$ form a
complete orthogonal system in $L_{\alpha,\beta}^2(\Omega)$;
see(\cite{a15}, \cite{a16}). To present a Galerkin solution for
(\ref{6x}) it is fundamental that the basis functions in the
approximate solution satisfy in the homogeneous initial condition.
To this end, since $G_n^{0,-1}(0) = 0,\;\; n \ge 1$, then we can
consider $\{G_n^{0,-1}(v),~~n\geq 1\}$ as suitable basis functions
to the Galerkin solution of (\ref{6x}).

From (\ref{7}) and the following formula \cite{aa16}
\[
J_i^{\alpha,\beta}(v)=\sum\limits_{k=0}^{i}{(-1)^{i-k}\frac{\Gamma{(i+\beta+1)}\Gamma{(i+k+\alpha+\beta+1)}}
{\Gamma{(k+\beta+1)}\Gamma{(i+\alpha+\beta+1)}(i-k)!k!}v^k},\quad
\alpha,\beta\in \mathcal N_4,
\]
we can obtain the following explicit formula for $G_i^{0,-1}(v)$
\begin{equation}\label{8} G_i^{0,-1}(v)= (2v)J_{i-1}^{0,1}(v)=2
\sum_{k=0}^{i-1}(-1)^{i-1-k} \frac{(i+k)!}{(k+1)! (i-1-k)!
k!}v^{k+1},~~~~~i \ge 1.\end{equation}

For any continuous function $Z(v)$ on $\Omega$, we define the
Legendre Gauss interpolation operator $\mathcal I_N$, as
\begin{equation}\label{9}
\mathcal I_N Z(v) =\sum\limits_{s=0}^N
{\frac{\bigg(Z,J_s^{0,0}\bigg)_{N,0,0}}{\|J_s^{0,0}\|_{N,0,0}^2}
J_s^{0,0}(v)}.\end{equation}

Let $\mathcal P_N$ be the space of all algebraic polynomials of
degree up to $N$. We introduce Legendre projection $\Pi_N:
L^2(\Omega) \to \mathcal P_N$ which is a mapping such that for any
$Z(v) \in L^2(\Omega)$,
\begin{equation}\label{cc5}
\bigg(Z-\Pi_N Z, \phi\bigg)_{0,0}=0,\quad \forall \phi \in \mathcal
P_N.
\end{equation}
\section{Discrete Galerkin Approach}
In this section, we present the numerical solution of (\ref{6x}) by
using the discrete Galerkin method based on GJPs.

Let
\begin{equation}\label{10c}
\tilde u_N(v)=\sum\limits_{i=1}^N{b_i G_{i}^{0,-1}(v)},
\end{equation}
be the Galerkin solution of (\ref{6x}). It is trivial that $\tilde
u_N(0)=0.$

Galerkin formulation of (\ref{6x}) is to find $\tilde u_N(v)$, such
that
\begin{equation}\label{rv17}
\bigg(\mathcal M^q \tilde u_N,G_i^{0,-1}\bigg)_{0,-1}=\bigg(\bar
p(v) \tilde u_N(v),G_i^{0,-1}\bigg)_{0,-1}+\bigg(\bar
f(v),G_i^{0,-1}\bigg)_{0,-1}+\lambda\bigg(\mathcal K(\tilde
u_N),G_i^{0,-1}\bigg)_{0,-1}, ~~i=1,2,...,N
\end{equation}
where $\mathcal K(\tilde u_N)=\int\limits_0^v{\tilde K(v,w)\tilde
u_N(w)dw}.$

Applying transformation $w(\theta)=v \theta,~~\theta \in \Omega$~ we
get
\begin{equation}\label{rv5}
\mathcal K(\tilde u_N)=\mathcal K_\theta(\tilde u_N)=v
\int\limits_0^1{\tilde K(v,w(\theta)) \tilde u_N(w(\theta))d\theta}.
\end{equation}

Substituting (\ref{rv5}) in (\ref{rv17}) yields
\begin{multline}\label{12}
\bigg(\mathcal M^q \tilde u_N,G_i^{0,-1}\bigg)_{0,-1}=\bigg(\bar
p(v) \tilde u_N(v),G_i^{0,-1}\bigg)_{0,-1}+\bigg(\bar
f(v),G_i^{0,-1}\bigg)_{0,-1}+\lambda\bigg(\mathcal K_\theta(\tilde
u_N),G_i^{0,-1}\bigg)_{0,-1},\\ i=1,2,...,N.
\end{multline}

Inserting (\ref{10c}) in (\ref{12}) we get
\begin{multline}\label{13}
\sum\limits_{j=1}^{N}{b_j \bigg\{\bigg(\mathcal M^q
G_j^{0,-1}(v),G_i^{0,-1}\bigg)_{0,-1}-\bigg(\bar p(v)
G_j^{0,-1}(v),G_i^{0,-1}\bigg)_{0,-1}-\lambda\bigg(\mathcal
K_{\theta}(G_j^{0,-1}),G_i^{0,-1}\bigg)_{0,-1}\bigg\}}
\\ =\bigg(\bar f(v),G_i^{0,-1}\bigg)_{0,-1},
~~i=1,2,...,N.\hspace{.01 cm}
\end{multline}

Following the relation $G_i^{0,-1}(v)
\delta^{0,-1}(v)=G_{i-1}^{0,1}(v)$, we can rewrite (\ref{13}) as
\begin{multline}\label{14}
\sum\limits_{j=1}^{N}{b_j \bigg\{\bigg(\mathcal M^q
G_j^{0,-1}(v),G_{i-1}^{0,1}\bigg)_{0,0}-\bigg(\bar p(v)
G_j^{0,-1}(v),G_{i-1}^{0,1}\bigg)_{0,0}-\lambda\bigg(\mathcal
K_{\theta}(G_j^{0,-1}),G_{i-1}^{0,1}\bigg)_{0,0}\bigg\}}
\\=\bigg(\bar
f(v),G_{i-1}^{0,1}\bigg)_{0,0},~~ i=1,2,...,N.
\end{multline}

Now, we try to find an explicit form for $\mathcal M^q G_j^{0,-1}$.
To this end, using (\ref{8}) we have
\begin{eqnarray}\label{15}
\mathcal M^q G_j^{0,-1}(v)&=&
\frac{2}{\Gamma{(1-q)}}\sum\limits_{k=0}^{j-1}{(-1)^{j-1-k}\frac{(j+k)!}{(k+1)!k!
(j-1-k)!}
\int\limits_0^v{\big(v^{\frac{1}{q}}-w^{\frac{1}{q}}\big)^{-q}(w^{k+1})'dw}}\\
\nonumber
&=&\frac{2}{\Gamma{(1-q)}}\sum\limits_{k=0}^{j-1}{(-1)^{j-1-k}\frac{(j+k)!}{(k!)^2
(j-1-k)!}
\int\limits_0^v{\big(v^{\frac{1}{q}}-w^{\frac{1}{q}}\big)^{-q} w^{k}
dw}}.
\end{eqnarray}

Applying the relation\cite{ax15}
\begin{equation*}
\int\limits_0^v{\bigg(v^{\frac{1}{q}}-w^{\frac{1}{q}}\bigg)^{-q} w^k
dw}=\bigg(\frac{q \pi \csc{(\pi q)}\Gamma{(q+qk)}}{\Gamma{(q)}
\Gamma{(1+k q)}}\bigg)v^k,~~ k \ge 0,
\end{equation*}
in (\ref{15}) we can obtain the following explicit formula for
$\mathcal M^q G_j^{0,-1}$:
\begin{equation}\label{15x}
\mathcal M^q G_j^{0,-1}(v)
=\frac{2}{\Gamma{(1-q)}}\sum\limits_{k=0}^{j-1}{(-1)^{j-1-k}\frac{(j+k)!}{(k!)^2
(j-1-k)!}\bigg(\frac{q \pi \csc{(\pi q)}\Gamma{(q+qk)}}{\Gamma{(q)}
\Gamma{(1+k q)}}\bigg)v^k}=:\Psi_{j,q}(v),
\end{equation}

Substituting (\ref{15x}) in (\ref{14}) we obtain
\begin{multline}\label{16}
\sum\limits_{j=1}^{N}{b_j
\bigg\{\bigg(\Psi_{j,q}(v),G_{i-1}^{0,1}\bigg)_{0,0}-\bigg(\bar p(v)
G_j^{0,-1}(v),G_{i-1}^{0,1}\bigg)_{0,0}-\lambda\bigg(\mathcal
K_{\theta}(G_j^{0,-1}),G_{i-1}^{0,1}\bigg)_{0,0}\bigg\}}\\=\bigg(\bar
f(v),G_{i-1}^{0,1}\bigg)_{0,0},~~i=1,2,...,N.
\end{multline}

In this position, we approximate the integral terms of (\ref{16})
using $(N+1)-$point Legendre Gauss quadrature formula. Our discrete
Galerkin method is to seek \begin{equation}\label{10}\bar
u_N(v)=\sum\limits_{i=1}^N{a_i G_{i}^{0,-1}(v)},\end{equation} such
that coefficients $\{a_j\}_{j=1}^N$ satisfies in the following
algebraic system of linear equations
\begin{multline}\label{17}
\sum\limits_{j=1}^{N}{a_j
\bigg\{\bigg(\Psi_{j,q}(v),G_{i-1}^{0,1}\bigg)_{0,0}-\bigg(\bar p(v)
G_j^{0,-1}(v),G_{i-1}^{0,1}\bigg)_{N,0,0}-\lambda\bigg(\mathcal
K_{N,\theta}(G_j^{0,-1}),G_{i-1}^{0,1}\bigg)_{N,0,0}\bigg\}}
\\=\bigg(\bar f(v),G_{i-1}^{0,1}\bigg)_{N,0,0}, i=1,2,...,N,
\end{multline}
where
\begin{equation}\label{rv6}
\mathcal K_{N,\theta}(G_j^{0,-1})=v \sum\limits_{k=0}^{N}{\tilde
K(v,w(\theta_k)) G_j^{0,-1}(w(\theta_k))\delta_k}.
\end{equation}

Here $\{\theta_k\}_{k=0}^N$ and $\{\delta_k\}_{k=0}^N$ are the
shifted Legendre Gauss quadrature points on $\Omega$ and
corresponding weights respectively. Note that, from (\ref{15x}) we
can see that $\Psi_{j,q}(v)$ is a polynomial from degree at most
$N$, then we have
$\bigg(\Psi_{j,q}(v),G_{i-1}^{0,1}\bigg)_{N,0,0}=\bigg(\Psi_{j,q}(v),G_{i-1}^{0,1}\bigg)_{0,0}.$
It is trivial that the solution of (\ref{17}) gives us unknown
coefficients $\{a_i\}_{i=1}^N$ in (\ref{10}).
\section{Existence and Uniqueness Theorem for Discrete Galerkin Algebraic System}
The main object of this section is providing an existence and
uniqueness Theorem for a special case of the discrete Galerkin
algebraic system of equations (\ref{17}) with $\bar p(v)=1$ and
$0<q<\frac{1}{2}$. Throughout the paper, $C_i$ will denote a generic
positive constant that is independent on $N$.

First, we give some preliminaries which will be used in the sequel.
\begin{definition}
Let $\mathcal X, \mathcal Y$ be normed spaces. A linear operator
$\mathcal A: \mathcal X \to \mathcal Y $ is compact if the set
$\{\mathcal A {x}|~ ||x||_{\mathcal X}\leq 1\}$ has compact closure
in $\mathcal Y$. 
\end{definition}
\begin{theorem}\label{rt1}[\ref{rvv1}, \ref{rvv2},
\ref{rvv3}] Assume $\mathcal X,~\mathcal Y$ be two banach spaces.
Let
\begin{equation}\label{rv7}v=\mathcal A v+f,
\end{equation}
be a linear operator equation where $\mathcal A:\mathcal X \to
\mathcal Y$ is a linear continuous operator, and the operator
$I-\mathcal A$ is continuously invertible. As an approximation
solution of (\ref{rv7}) we consider the equation
\begin{equation}\label{rv8c}
v_N=\mathcal A_N v_N+\mathcal B_N f,
\end{equation}
which can be rewritten as
\begin{equation}\label{rv8}
v_N=\tilde{\mathcal B}_N \mathcal A v_N+\mathcal S_N v_N+\mathcal
B_N f,
\end{equation}
where $\mathcal A_N$ is a linear continuous operator in a closed
subspace $\tilde{\mathcal Y}$ of $\mathcal Y$. $\mathcal B_N,
\tilde{\mathcal B}_N:\mathcal Y\to \tilde{\mathcal Y}$ are linear
continuous projection operators and $\mathcal S_N=\mathcal
A_N-\mathcal{\tilde B}_N \mathcal A$ is a linear operator in
$\tilde{\mathcal Y}$. If the following conditions are fulfilled
\begin{itemize}
    \item  for any $Z \in \tilde{\mathcal Y}$ we have $\|\mathcal S_N Z\|\to 0$ ~as~$N \to \infty$
    \item $\|\mathcal A-\tilde{\mathcal B}_N \mathcal A\| \to 0$~as ~$N \to \infty$
    \item $\|f-\mathcal B_N f\| \to 0$~ as ~ $N \to \infty$
\end{itemize}
then (\ref{rv8}) possesses a uniquely solution $v_N \in
\tilde{\mathcal Y}$, for a sufficiently large $N$.
\end{theorem}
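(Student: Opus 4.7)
The plan is to recast (\ref{rv8}) as the single linear operator equation
\[
L_N v_N = \mathcal B_N f, \qquad L_N := I - \tilde{\mathcal B}_N \mathcal A - \mathcal S_N,
\]
acting on the closed subspace $\tilde{\mathcal Y}$, and to show that $L_N:\tilde{\mathcal Y}\to\tilde{\mathcal Y}$ is continuously invertible for all sufficiently large $N$. Once this is established, the unique solution is $v_N = L_N^{-1}\mathcal B_N f$, which belongs to $\tilde{\mathcal Y}$ because $\mathcal B_N$ maps into it, and condition (3) additionally ensures the right-hand side is bounded (and convergent) as $N\to\infty$.

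To produce $L_N^{-1}$, I would exploit the fact that $I - \mathcal A$ is already continuously invertible and treat $L_N$ as a perturbation of it. Writing
\[
L_N = (I - \mathcal A) + \bigl[(\mathcal A - \tilde{\mathcal B}_N\mathcal A) - \mathcal S_N\bigr] = (I - \mathcal A)\Bigl[I + (I - \mathcal A)^{-1}\bigl((\mathcal A - \tilde{\mathcal B}_N\mathcal A) - \mathcal S_N\bigr)\Bigr],
\]
I would apply the Banach (Neumann series) perturbation lemma: the bracketed factor is invertible as soon as
\[
\Bigl\|(I - \mathcal A)^{-1}\bigl((\mathcal A - \tilde{\mathcal B}_N\mathcal A) - \mathcal S_N\bigr)\Bigr\|_{\tilde{\mathcal Y}\to\tilde{\mathcal Y}} < 1,
\]
and then $L_N^{-1}$ exists with norm uniformly bounded in $N$. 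This reduces the whole theorem to an operator-norm decay estimate on the perturbation.

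The perturbation splits naturally into two pieces. The first, $(I-\mathcal A)^{-1}(\mathcal A - \tilde{\mathcal B}_N\mathcal A)$, is routine: condition (2) gives operator-norm convergence $\|\mathcal A - \tilde{\mathcal B}_N\mathcal A\|\to 0$, and continuity of $(I-\mathcal A)^{-1}$ finishes it. The genuine obstacle is the $\mathcal S_N$ term, since condition (1) yields only the pointwise estimate $\|\mathcal S_N Z\|\to 0$ for each fixed $Z\in\tilde{\mathcal Y}$, which does not by itself imply operator-norm decay. My strategy here is the standard one: first invoke the Banach--Steinhaus theorem to get uniform boundedness $\sup_N\|\mathcal S_N\|<\infty$, then upgrade pointwise to uniform convergence on precompact sets by using the compactness of $\mathcal A$ (which in our setting is a weakly-singular Volterra integral operator and hence compact on the relevant space). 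Writing $\mathcal S_N = \mathcal A_N - \tilde{\mathcal B}_N\mathcal A$ and noting that the image of the unit ball under $\mathcal A$ is relatively compact, the pointwise decay of $\mathcal S_N$ can be promoted to uniform decay on that image, from which the required operator-norm bound follows.

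With both pieces shown to vanish in norm, the bracketed factor is invertible by Neumann series for $N$ large, $L_N$ is invertible, and $v_N = L_N^{-1}\mathcal B_N f$ is the unique solution in $\tilde{\mathcal Y}$; condition (3) moreover keeps the data bounded uniformly in $N$, so the construction is stable. I expect the harder step, by far, to be controlling the $\mathcal S_N$ contribution, because bridging pointwise and norm convergence is precisely the point at which compactness of $\mathcal A$ (or of $\{\mathcal S_N\}$ collectively, in the sense of Anselone) becomes indispensable; the rest of the argument is algebraic manipulation followed by the Banach perturbation lemma.
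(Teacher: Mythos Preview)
The paper does not supply its own proof of this theorem; the bracketed references in the theorem heading cite it from the Kantorovich--Vainikko literature, and the text moves directly to Lemma~\ref{rl1}. So there is no in-paper argument to compare against. Your overall strategy---write $L_N=I-\tilde{\mathcal B}_N\mathcal A-\mathcal S_N$ as a perturbation of $I-\mathcal A$ and invert via a Neumann series---is indeed the classical route in those references.

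Your treatment of the $\mathcal S_N$ term, however, has a genuine gap. You invoke compactness of $\mathcal A$ to upgrade the pointwise hypothesis $\|\mathcal S_N Z\|\to 0$ to operator-norm decay, but compactness of $\mathcal A$ is \emph{not} a hypothesis of Theorem~\ref{rt1}. You are importing it from the downstream application: the paper establishes compactness of $\mathcal M^q$ only later, inside the proof of its existence--uniqueness theorem, precisely so that Lemma~\ref{rl1} can be applied to verify the second bullet. That does not license compactness in a proof of the abstract statement. Moreover, even granting compactness of $\mathcal A$, the mechanism you sketch---uniform decay of $\mathcal S_N$ on the precompact set $\mathcal A(\text{unit ball})$---does not bound $\|\mathcal S_N\|_{\tilde{\mathcal Y}\to\tilde{\mathcal Y}}$: you need control of $\mathcal S_N$ on the unit ball of $\tilde{\mathcal Y}$, not on the range of $\mathcal A$, and nothing in your factorisation composes $\mathcal S_N$ with $\mathcal A$. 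The standard ways to close this are either an Anselone-type collectively-compact hypothesis on $\{\mathcal A_N\}$ or additional structural assumptions on $\mathcal S_N$; with only the three bullets as stated, pointwise convergence of $\mathcal S_N$ is too weak for the Neumann-series step you propose. A secondary issue: your factorisation $L_N=(I-\mathcal A)[\,I+(I-\mathcal A)^{-1}(\cdots)\,]$ tacitly treats $I-\mathcal A$ as an operator on $\tilde{\mathcal Y}$, but $\mathcal A$ need not preserve $\tilde{\mathcal Y}$, so the bracketed factor is not obviously an endomorphism of $\tilde{\mathcal Y}$ either.
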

\begin{lemma}\label{rl1}
\cite{rvv4} Let $\mathcal X, \mathcal Y$ be banach spaces and
$\tilde{\mathcal Y}$ be a subspace of $\mathcal Y$. Let
$\tilde{\mathcal B}_N:\mathcal Y \to \tilde{\mathcal Y}$ be a family
of linear continuous projection operators with
\[
\tilde{\mathcal B}_N y \to y~~ \text{as}~~ N \to \infty, ~~ y \in
\mathcal Y.
\]

Assume that linear operator $\mathcal A:\mathcal X \to \mathcal Y$
be compact. Then
\[
\|\mathcal A-\tilde{\mathcal B}_N \mathcal A\| \to 0~~\text{as}~~ N
\to \infty.
\]
\end{lemma}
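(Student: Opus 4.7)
The plan is to reduce the lemma to the standard operator-norm version of the Banach--Steinhaus plus compactness argument: pointwise convergence on a compact set upgrades to uniform convergence, and uniform convergence on $\mathcal A(\overline{B}_{\mathcal X}(0,1))$ is exactly the operator-norm convergence we want.

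First I would invoke the uniform boundedness principle. Since $\tilde{\mathcal B}_N y$ converges for every $y \in \mathcal Y$, the sequence $\{\tilde{\mathcal B}_N y\}$ is bounded for each $y$, so Banach--Steinhaus yields $M := \sup_N \|\tilde{\mathcal B}_N\| < \infty$. Consequently $\|I - \tilde{\mathcal B}_N\| \le 1 + M$ uniformly in $N$. This uniform bound will be the leverage that lets a finite approximation argument control the error on the whole unit ball.

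Next I would exploit compactness of $\mathcal A$. The set $K = \overline{\mathcal A(\overline{B}_{\mathcal X}(0,1))}$ is compact in $\mathcal Y$, so for any prescribed $\varepsilon > 0$ it admits a finite cover by open balls of radius $\varepsilon / (2(1+M))$ centered at points $y_1,\dots,y_m \in K$. By the assumed pointwise convergence, there exists $N_0$ such that for every $N \ge N_0$ and every $i \in \{1,\dots,m\}$,
\begin{equation*}
\|y_i - \tilde{\mathcal B}_N y_i\|_{\mathcal Y} < \frac{\varepsilon}{2}.
\end{equation*}
Given any $x \in \mathcal X$ with $\|x\|_{\mathcal X} \le 1$, choose an index $i$ with $\|\mathcal A x - y_i\|_{\mathcal Y} < \varepsilon/(2(1+M))$; then the triangle inequality together with the uniform bound on $I - \tilde{\mathcal B}_N$ produces
\begin{equation*}
\|(I-\tilde{\mathcal B}_N)\mathcal A x\|_{\mathcal Y} \le (1+M)\,\|\mathcal A x - y_i\|_{\mathcal Y} + \|(I-\tilde{\mathcal B}_N) y_i\|_{\mathcal Y} < \varepsilon.
\end{equation*}
Taking the supremum over the unit ball yields $\|\mathcal A - \tilde{\mathcal B}_N \mathcal A\| < \varepsilon$ for all $N \ge N_0$, which is the conclusion.

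The only genuinely subtle point in the argument is the upgrade from pointwise to uniform convergence on $K$; everything else (existence of finite $\varepsilon$-nets, triangle inequality, uniform operator bound) is routine. In particular, the finite-cover step is where compactness of $\mathcal A$ is really needed, since without it no uniform control over $\mathcal A(\overline{B}_{\mathcal X}(0,1))$ could be obtained from a mere pointwise statement about $\tilde{\mathcal B}_N$.
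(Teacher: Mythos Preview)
Your argument is correct and is precisely the standard Banach--Steinhaus plus total-boundedness proof of this fact. Note, however, that the paper does not actually supply a proof of this lemma: it is quoted as a known result from Atkinson's monograph (reference~\cite{rvv4}), so there is no in-paper argument to compare against. Your proposal is essentially the proof one finds in that reference, so nothing further is needed.
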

\begin{lemma}\label{l1} (Interpolation error bound\cite{aa26}) Let
$\mathcal I_N Z$ be the interpolation polynomial approximation of
the function $Z(v)$ defined in (\ref{9}). For any $Z(v) \in
B_{0,0}^k(\Omega)$ with $k \ge 1$, we have
\[
\|Z-\mathcal I_N Z\|_{0,0} \le C N^{-k} |Z|_{0,0,k}.
\]
\end{lemma}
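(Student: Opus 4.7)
The plan is to split the error $Z-\mathcal I_N Z$ through the Legendre projection $\Pi_N$ defined in (\ref{cc5}) by writing $Z-\mathcal I_N Z=(Z-\Pi_N Z)+(\Pi_N Z-\mathcal I_N Z)$ and applying the triangle inequality in the $\|\cdot\|_{0,0}$ norm. The first summand is an orthogonal projection error, while the second is a polynomial of degree at most $N$, for which the continuous norm $\|\cdot\|_{0,0}$ and the discrete Legendre-Gauss norm $\|\cdot\|_{N,0,0}$ coincide thanks to exactness of the $(N+1)$-point Gauss rule on $\mathcal P_{2N+1}$.

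For the first term, I would establish that for $Z\in B_{0,0}^k(\Omega)$ the shifted-Legendre coefficients decay fast enough to yield $\|Z-\Pi_N Z\|_{0,0}\le C N^{-k}|Z|_{0,0,k}$. The standard argument expands $Z-\Pi_N Z$ in the basis of shifted Legendre polynomials beyond degree $N$, applies Parseval's identity, and then performs $k$ successive integrations by parts on each modal coefficient. This last step is precisely where the sliding-weight structure of $B_{\alpha,\beta}^k$ enters: each integration by parts replaces a factor from $\delta^{\alpha,\beta}$ by one from $\delta^{\alpha+1,\beta+1}$, so after $k$ steps one is left with $Z^{(k)}$ paired with weights $\delta^{k,k}$, matching the definition of $|Z|_{0,0,k}$. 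Combining this with the sharp asymptotics of the normalising constants for the $k$-th derivatives of Legendre polynomials yields the factor $N^{-k}$.

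For the second term, since $\Pi_N Z-\mathcal I_N Z\in\mathcal P_N$ its square lies in $\mathcal P_{2N}$ and is integrated exactly by the Legendre-Gauss quadrature, so $\|\Pi_N Z-\mathcal I_N Z\|_{0,0}=\|\Pi_N Z-\mathcal I_N Z\|_{N,0,0}$. Because $\mathcal I_N Z$ agrees with $Z$ at the Gauss nodes, this discrete norm coincides with $\|\Pi_N Z-Z\|_{N,0,0}$. A stability estimate of the form $\|\Pi_N Z-Z\|_{N,0,0}\le C N^{-k}|Z|_{0,0,k}$ — proved by inserting an auxiliary Legendre truncation of sufficiently high degree and exploiting exactness of the quadrature on the polynomial part, then controlling the quadrature error on the remainder via a weighted Sobolev embedding — recovers the desired bound.

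The hard part is the sharp projection estimate in the Jacobi-weighted space $B_{0,0}^k(\Omega)$: obtaining the optimal $N^{-k}$ rate requires carefully tracking the weighted $L^2$ norms of successive differentiations of the Legendre coefficients, which is the spectral-approximation machinery of \cite{aa26}. Once that ingredient is in place, the quadrature exactness on $\mathcal P_{2N+1}$ and the nodal interpolation identity reduce the remaining term to a perturbation of the same order, and the claimed bound follows.
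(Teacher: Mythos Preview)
The paper does not give its own proof of this lemma: it is stated as a quoted result with a citation to \cite{aa26} (Shen--Tang--Wang), and no argument is supplied in the text. Your sketch---splitting through the Legendre projection $\Pi_N$, bounding $\|Z-\Pi_N Z\|_{0,0}$ via Parseval and $k$-fold integration by parts in the weighted space $B_{0,0}^k(\Omega)$, and then handling $\Pi_N Z-\mathcal I_N Z$ through Gauss-quadrature exactness on $\mathcal P_{2N+1}$ together with the nodal identity $\mathcal I_N Z(v_k)=Z(v_k)$---is precisely the standard proof in that reference, so your approach is correct and matches what the paper is invoking.
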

\begin{lemma}\label{l3} \cite{r6}
For every bounded function $Z(v)$, there exists a constant $C$
independent of $Z$ such that
\[
\sup\limits_{N}\|\mathcal I_N Z\|_{0,0} \le C \sup\limits_{v}|Z(v)|.
\]
\end{lemma}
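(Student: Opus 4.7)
The plan is to exploit the exactness of the $(N{+}1)$-point Legendre Gauss quadrature on $\Omega$. Since $\mathcal I_N Z \in \mathcal P_N$, the square $(\mathcal I_N Z)^2$ is a polynomial of degree at most $2N$, which the quadrature integrates exactly. Hence
\[
\|\mathcal I_N Z\|_{0,0}^2 = \bigl(\mathcal I_N Z,\mathcal I_N Z\bigr)_{0,0} = \bigl(\mathcal I_N Z,\mathcal I_N Z\bigr)_{N,0,0} = \sum_{k=0}^{N} \bigl(\mathcal I_N Z(v_k^{0,0})\bigr)^2 \delta_k^{0,0}.
\]

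Next I would verify the interpolation identity $\mathcal I_N Z(v_k^{0,0}) = Z(v_k^{0,0})$ directly from the definition \eqref{9}. Using the discrete orthogonality of $\{J_s^{0,0}\}_{s=0}^{N}$ with respect to $(\cdot,\cdot)_{N,0,0}$, the formula \eqref{9} expresses $\mathcal I_N Z$ as the discrete Fourier expansion in the shifted Legendre basis, and evaluating at any node $v_k^{0,0}$ collapses the sum via the reproducing property to $Z(v_k^{0,0})$. Substituting this into the display above yields
\[
\|\mathcal I_N Z\|_{0,0}^2 = \sum_{k=0}^{N} Z(v_k^{0,0})^2 \, \delta_k^{0,0} \le \Bigl(\sup_{v \in \Omega}|Z(v)|\Bigr)^{2} \sum_{k=0}^{N} \delta_k^{0,0}.
\]

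To finish I would note that, again by exactness of the quadrature on the constant polynomial $1 \in \mathcal P_N$, the weights satisfy $\sum_{k=0}^{N} \delta_k^{0,0} = \int_\Omega 1 \, dv = 1$, independent of $N$. Taking square roots and then the supremum over $N$ delivers the stated bound with the explicit constant $C=1$.

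The only delicate point is justifying the interpolatory character of $\mathcal I_N$ as defined by \eqref{9}; this is not a calculation so much as a conceptual check that the discrete Legendre expansion truncated at index $N$ coincides with the unique polynomial in $\mathcal P_N$ that interpolates $Z$ at the $N{+}1$ Gauss nodes. Once this is in place, the remainder of the proof is a one-line application of Gauss-quadrature exactness, so I do not expect any further obstruction.
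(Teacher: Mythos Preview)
Your argument is correct and is in fact the standard one: exactness of the $(N{+}1)$-point Legendre--Gauss rule on polynomials of degree $\le 2N+1$ converts $\|\mathcal I_N Z\|_{0,0}^2$ into the discrete sum, the interpolation property $\mathcal I_N Z(v_k^{0,0})=Z(v_k^{0,0})$ replaces the interpolant by $Z$ at the nodes, and $\sum_k \delta_k^{0,0}=\int_\Omega 1\,dv=1$ closes the estimate with $C=1$.

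There is nothing to compare against in the paper itself: the lemma is not proved there but simply quoted from \cite{r6}. Your write-up supplies the missing justification, and the ``delicate point'' you flag is indeed routine---formula~(\ref{9}) defines $\mathcal I_N Z$ as the degree-$N$ discrete Legendre expansion at the Gauss nodes, which by discrete orthogonality is precisely the interpolant at those nodes.
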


\begin{lemma}\label{l4} (Legendre Gauss quadrature error bound\cite{aa26})
If $Z(v) \in B_{0,0}^k(\Omega)$ for some $k \ge 1$ and $\Phi \in
\mathcal P_N$, then for the Legendre Gauss integration we have

\[
\bigg|(Z,\Phi)_{0,0}-(Z,\Phi)_{N,0,0}\bigg| \le C N^{-k}
\|Z\|_{0,0,k} \|\Phi\|_{0,0}.
\]
\end{lemma}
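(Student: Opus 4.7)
The plan is to reduce the quadrature error to an interpolation error via the exactness property of Gauss quadrature, then apply the interpolation estimate from Lemma \ref{l1}.

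First, I would exploit the fact that the $(N+1)$-point Legendre Gauss quadrature is exact for polynomials of degree up to $2N+1$. Since $\mathcal{I}_N Z \in \mathcal{P}_N$ and $\Phi \in \mathcal{P}_N$, the product $(\mathcal{I}_N Z)\,\Phi$ has degree at most $2N$, so
\[
(\mathcal{I}_N Z,\Phi)_{0,0} = (\mathcal{I}_N Z,\Phi)_{N,0,0}.
\]
Moreover, because $\mathcal{I}_N Z$ interpolates $Z$ at the Legendre Gauss nodes $\{v_k^{0,0}\}_{k=0}^N$, the discrete inner product only sees the nodal values, giving $(\mathcal{I}_N Z,\Phi)_{N,0,0}=(Z,\Phi)_{N,0,0}$. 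Subtracting these identities from $(Z,\Phi)_{0,0}$ yields
\[
(Z,\Phi)_{0,0} - (Z,\Phi)_{N,0,0} = (Z-\mathcal{I}_N Z,\Phi)_{0,0}.
\]

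Second, I would apply the Cauchy--Schwarz inequality with respect to the Legendre weight $\delta^{0,0}\equiv 1$ to obtain
\[
\bigl|(Z-\mathcal{I}_N Z,\Phi)_{0,0}\bigr| \le \|Z-\mathcal{I}_N Z\|_{0,0}\,\|\Phi\|_{0,0}.
\]
Then Lemma \ref{l1} directly bounds the first factor by $C N^{-k}|Z|_{0,0,k}$, and since $|Z|_{0,0,k}\le \|Z\|_{0,0,k}$ by the definition of the non-uniformly Jacobi-weighted Sobolev norm, the desired estimate follows.

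The only subtle point is the second identity $(\mathcal{I}_N Z,\Phi)_{N,0,0}=(Z,\Phi)_{N,0,0}$, which implicitly assumes that $Z$ is well-defined and finite at the Gauss nodes; this is where the hypothesis $Z\in B_{0,0}^k(\Omega)$ with $k\ge 1$ is used, since a Sobolev embedding (or the interpolation-operator framework of \cite{aa26}) ensures pointwise values of $Z$ make sense. Aside from this mild regularity check, the proof is essentially a two-line reduction, with Lemma \ref{l1} doing the substantive analytic work.
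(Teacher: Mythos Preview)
Your argument is correct and is exactly the standard proof of this quadrature error bound: exactness of the $(N+1)$-point Gauss rule on $\mathcal P_{2N+1}$ together with the interpolation identity at the nodes reduces the quadrature error to $(Z-\mathcal I_N Z,\Phi)_{0,0}$, after which Cauchy--Schwarz and Lemma~\ref{l1} finish the job. The paper does not give its own proof of this lemma but simply cites it from \cite{aa26}, where the same exactness-plus-interpolation argument is used.
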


Now we intend to prove existence and uniqueness Theorem for a
special case of the discrete Galerkin system (\ref{17}) with $\bar
p(v)=1$ and $0<q<\frac{1}{2}$.
\begin{theorem}
(Existence and Uniqueness)Let $0<q<\frac{1}{2}$ and $\bar p(v)=1$.
If (\ref{6x}) has a uniquely solution $\bar u(v)$ then the linear
discrete Galerkin system (\ref{17}) has a uniquely solution $\bar
u_N(v) \in \mathcal P_N$ for sufficiently large $N$.
\end{theorem}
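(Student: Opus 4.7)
The plan is to view the discrete Galerkin system (\ref{17}) as a finite-dimensional approximation of a compact-operator fixed-point equation and then invoke Theorem \ref{rt1}. First, using the equivalence between (\ref{1}) and the Volterra integral equation (\ref{5}) and applying the substitution (\ref{6xx}) with $\bar p\equiv 1$, I would rewrite (\ref{6x}) as
\[
\bar u(v) \;=\; \bar g(v) + \lambda \int_0^v \mathcal K^{*}(v,w)\,\bar u(w)\,dw \;=:\; (\mathcal A \bar u)(v) + \bar g(v),
\]
where $\bar g(v)=\mathcal I^q f(v^{1/q})$ and $\mathcal K^{*}(v,w)=\frac{w^{1/q-1}}{q}\bar K(v^{1/q},w^{1/q})$. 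The hypothesis that (\ref{6x}) is uniquely solvable then translates into continuous invertibility of $I-\mathcal A$.

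Next, I would recast (\ref{17}) in the operator form (\ref{rv8}). Put $\tilde{\mathcal Y}=\mathrm{span}\{G_i^{0,-1}:1\le i\le N\}\subset\mathcal P_N$, and let $\mathcal B_N,\tilde{\mathcal B}_N:\mathcal Y\to\tilde{\mathcal Y}$ be the projections generated by the discrete inner product $(\cdot,\cdot)_{N,0,0}$ together with the test basis $\{G_{i-1}^{0,1}\}$. Using the identity $G_i^{0,-1}(v)\delta^{0,-1}(v)=G_{i-1}^{0,1}(v)$ already exploited in passing from (\ref{13}) to (\ref{14}), the Galerkin equations (\ref{17}) take the form
\[
\bar u_N \;=\; \tilde{\mathcal B}_N\mathcal A\,\bar u_N + \mathcal S_N\bar u_N + \mathcal B_N\bar g,
\]
where $\mathcal S_N$ records the quadrature error introduced by replacing $(\cdot,\cdot)_{0,0}$ with $(\cdot,\cdot)_{N,0,0}$ in (\ref{17}).

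With this setup, the three hypotheses of Theorem \ref{rt1} can be verified in turn. For $\|\mathcal A-\tilde{\mathcal B}_N\mathcal A\|\to 0$ one invokes Lemma \ref{rl1}, combining the compactness of $\mathcal A$ with the standard convergence $\tilde{\mathcal B}_N y\to y$ for $y\in\mathcal Y$. For $\|\bar g-\mathcal B_N\bar g\|\to 0$ one applies Lemma \ref{l1}, noting that the transformation (\ref{6xx}) is precisely what lifts $\bar g$ into $B_{0,0}^k(\Omega)$, in view of the regularity expansion (\ref{6}). Finally, for $\|\mathcal S_N Z\|\to 0$ with $Z\in\tilde{\mathcal Y}$, the integrands appearing in $\mathcal S_N$ are products of polynomials of controlled degree, so Lemmas \ref{l3} and \ref{l4} give the required decay.

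The main obstacle is establishing the compactness of $\mathcal A$ on $\mathcal Y$. The transformed kernel behaves near the diagonal like $\mathcal K^{*}(v,w)\sim C\,v^{1-q}(v-w)^{q-1}$, so a direct Hilbert-Schmidt estimate on unweighted $L^2(\Omega)$ is borderline in the range $0<q<\frac12$; to recover compactness one has to work in a weighted $L^2$ space whose weight absorbs the near-diagonal blow-up, and it is precisely this choice that forces the restriction on $q$ stated in the theorem. Once this compactness claim is secured, the remaining verifications reduce to routine applications of the interpolation and quadrature lemmas already cited.
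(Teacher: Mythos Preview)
Your overall strategy---put (\ref{17}) into the abstract form (\ref{rv8}) and verify the three hypotheses of Theorem~\ref{rt1} via Lemmas~\ref{rl1}, \ref{l1}, \ref{l3}, \ref{l4}---matches the paper's. But your choice of the operator $\mathcal A$ is different from the paper's, and this difference creates a real gap in your Step~1. The paper does \emph{not} pass to the Volterra integral equation (\ref{5}); it keeps $\mathcal A=\mathcal T:=\mathcal M^q-\lambda\mathcal K_\theta$ and, using that $\mathcal M^q$ maps $\mathcal P_N$ into $\mathcal P_N$ (this is exactly (\ref{15x})) together with $\bar p\equiv 1$, shows that (\ref{17}) is literally equivalent to $\mathcal I_N\bar{\mathcal R}_N=0$, i.e.\ to $\bar u_N=\mathcal I_N\mathcal T_N\bar u_N-\mathcal I_N\bar f$ with $\mathcal T_N=\mathcal M^q-\lambda\mathcal K_{N,\theta}$; then $\tilde{\mathcal B}_N=\Pi_N$, $\mathcal B_N=\mathcal I_N$, $\mathcal S_N=\mathcal I_N\mathcal T_N-\Pi_N\mathcal T$. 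Your recasting instead takes $\mathcal A$ to be the weakly singular integral operator coming from (\ref{5}), but (\ref{17}) is a discretization of the differential form (\ref{6x}), not of the integral reformulation: the matrix entries contain $(\Psi_{j,q},G_{i-1}^{0,1})_{0,0}=(\mathcal M^q G_j^{0,-1},G_{i-1}^{0,1})_{0,0}$, not any discretized version of $\mathcal M_1^q$ or of $\bar K$. Consequently your $\mathcal S_N$ cannot be ``just quadrature error''; it would also have to absorb the passage from $\mathcal M^q$ to its inverse, and you have not shown how (\ref{17}) can be rewritten as a projected fixed-point equation for your integral $\mathcal A$. As it stands, the identification $\bar u_N=\tilde{\mathcal B}_N\mathcal A\bar u_N+\mathcal S_N\bar u_N+\mathcal B_N\bar g$ is asserted rather than derived.

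The second gap concerns the compactness step and the role of the hypothesis $0<q<\tfrac12$. In the paper the restriction enters because compactness is proved for $\mathcal M^q:H_{0,0}^1(\Omega)\to L^2(\Omega)$ (note the extra derivative in the domain), via the crude Hilbert--Schmidt bound $M^2=\int_0^1\!\int_0^1 (v^{1/q}-w^{1/q})^{-2q}\,dw\,dv<\infty$, which needs $2q<1$. Your diagnosis---that one must pass to a weighted $L^2$ to tame the $(v-w)^{q-1}$ singularity of $\mathcal K^{*}$, and that this forces $q<\tfrac12$---is off: weakly singular Volterra operators with exponent $q-1\in(-1,0)$ are compact on unweighted $L^2(\Omega)$ for every $q\in(0,1)$, so your integral-equation route would not explain why the theorem singles out $q<\tfrac12$. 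In short, the paper's specific choice of $\mathcal X=H_{0,0}^1$, $\mathcal Y=L^2$, $\mathcal A=\mathcal M^q-\lambda\mathcal K_\theta$ is what makes both the recasting of (\ref{17}) and the compactness argument go through, and both of these are missing from your outline.
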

\begin{proof}
Our strategy in proof is based on two steps. First, we try to
represent (\ref{17}) in the operator form (\ref{rv8}). Then by
applying Theorem \ref{rt1} to operator form obtained in the first
step the desired result have been concluded.

{\bf Step 1:} In this step, we show that the discrete Galerkin
system (\ref{17}) can be written in the operator form (\ref{rv8}).
To this end, consider (\ref{6x}) and define
\[
\mathcal{\bar R}_N(v)=\mathcal M^q \bar u_N(v)-\bar u_N(v)-\lambda
\mathcal K_{N,\theta}(\bar u_N)-\bar f(v).
\]

According to the proposed method, we have
\[
\bigg(\mathcal{\bar R}_N(v),G_{i-1}^{0,1}(v)\bigg)_{N,0,0}=0,\quad
i=1, 2,...,N.
\]

From interpolation and Legendre Gauss quadrature properties, we can
write
\begin{equation}\label{18}
\bigg(\mathcal{\bar
R}_N(v),G_{i-1}^{0,1}(v)\bigg)_{N,0,0}=\bigg(\mathcal
I_N(\mathcal{\bar
R}_N),G_{i-1}^{0,1}(v)\bigg)_{N,0,0}=\bigg(\mathcal
I_N(\mathcal{\bar R}_N),G_{i-1}^{0,1}(v)\bigg)_{0,0}=0,\quad i=1,
2,...,N.
\end{equation}

Since $\mathcal I_N(\mathcal{\bar R}_N(v))$ is a polynomial, it can
be represented by a linear orthogonal polynomial expansion based on
$\big\{G_i^{0,-1}(v)\big\}_{i=0}^N$, as
\begin{equation}\label{cc6}
\mathcal I_N(\mathcal{\bar
R}_N)=\sum\limits_{i=1}^{N}{\frac{\bigg(\mathcal I_N(\mathcal{\bar
R}_N),G_{i}^{0,-1}\bigg)_{0,-1}}{\|G_i^{0,-1}\|_{0,-1}^2}
G_i^{0,-1}(v)}=\sum\limits_{i=1}^{N}{\frac{\bigg(\mathcal
I_N(\mathcal{\bar
R}_N),G_{i-1}^{0,1}\bigg)_{0,0}}{\|G_i^{0,-1}\|_{0,-1}^2}
G_i^{0,-1}(v)}.
\end{equation}

Using the relations (\ref{18}) and (\ref{cc6}) yields $\mathcal
I_N(\mathcal{\bar R}_N)=0$. Thus
\[
\mathcal I_N\bigg(\mathcal M^q \bar u_N-\bar u_N(v)-\lambda \mathcal
\mathcal K_{N,\theta}(\bar u_N)-\bar f\bigg)=0,
\]
which can be rewritten as
\begin{equation}\label{rv9c}
\bar u_N(v)=\mathcal I_N\bigg(\mathcal M^q -\lambda \mathcal
\mathcal K_{N,\theta}\bigg)\bar u_N-\mathcal I_N \bar f=\mathcal I_N
 \mathcal T_N \bar u_N -\mathcal I_N \bar f,
\end{equation}
and thereby
\begin{equation}\label{rv9}
\bar u_N(v)=\Pi_N \mathcal T \bar u_N+\mathcal S_N \bar u_N
-\mathcal I_N \bar f,
\end{equation}
where $\mathcal I_N$ and $\Pi_N$ are defined in (\ref{9}) and
(\ref{cc5}) respectively and
\begin{eqnarray*}
\mathcal T&=&\mathcal M^q-\lambda \mathcal K_\theta,\\
\mathcal T_N&=&\mathcal M^q -\lambda \mathcal \mathcal K_{N,\theta},\\
\mathcal S_N&=&\mathcal I_N \mathcal T_N-\Pi_N \mathcal T.
\end{eqnarray*}

Since (\ref{rv9}) is the form in which the discrete Galerkin method
is implemented, as it leads directly to the equivalent linear system
(\ref{17}). It can be easily check that (\ref{rv9}) can be
considered in the operator form (\ref{rv8}) by assuming
\begin{eqnarray}\label{cc1}
\nonumber\mathcal X&=&H_{0,0}^1(\Omega),~~\mathcal
Y=L^2(\Omega),~~\tilde{\mathcal
Y}=\mathcal P_N,\\
v_N&=&\bar u_N,~~\quad ~~\tilde{\mathcal B_N}=\Pi_N,~~\quad \mathcal
B_N=\mathcal
I_N,\\
\nonumber\mathcal A&=&\mathcal T,~~\quad ~~ \mathcal A_N=\mathcal
I_N \mathcal T_N,
\end{eqnarray}
which can be completed the desired result of step 1.

{\bf Step 2:} In this step we intend to apply Theorem \ref{rt1} with
the assumptions (\ref{cc1}) to prove the Theorem. To this end,
following Theorem \ref{rt1} we must show that
\begin{eqnarray}\label{cc3}
\nonumber&&1)~\text{for any}~ Z \in \mathcal P_N ~\text{we have}~
\|\mathcal S_N Z\|_{0,0}=\|\mathcal I_N \mathcal T_N Z-\Pi_N
\mathcal T Z\|_{0,0}
\to 0 ~\text{as}~ N \to \infty,\\
&&2) \|\mathcal T-\Pi_N \mathcal T\|_{0,0} \to 0~ \text{as}~ N \to
\infty,\\
\nonumber&&3) \|\bar f-\mathcal I_N \bar f\|_{0,0} \to 0 ~\text{as}~
N \to \infty.
\end{eqnarray}

First, we prove the first condition in (\ref{cc3}). For this, we can
write
\begin{equation}\label{rv13}
\|\mathcal S_N Z\|_{0,0}=\|\bigg(\mathcal I_N \mathcal T_N-\Pi_N
\mathcal T\bigg)Z\|_{0,0}\le \|\mathcal I_N\bigg(\mathcal
T_N-\mathcal T\bigg)Z\|_{0,0}+\|\bigg(\mathcal
I_N-\Pi_N\bigg)\mathcal T Z\|_{0,0}.
\end{equation}

Since $\mathcal M^q Z \in \mathcal P_N$ for any $Z \in \mathcal P_N$
then
\begin{equation}\label{rv14}
\|\mathcal S_N Z\|_{0,0}\le \lambda\Bigg(\|\mathcal
I_N\bigg(\mathcal K_{N,\theta}-\mathcal
K_{\theta}\bigg)Z\|_{0,0}+\|\bigg(\mathcal I_N-\Pi_N\bigg)\mathcal
K_\theta Z\|_{0,0}\Bigg).
\end{equation}

Using Lemmas \ref{l3} and \ref{l4} and the relations (\ref{rv5}) and
(\ref{rv6}) we can obtain
\begin{equation}\label{25rv}
\|\mathcal I_N\bigg(\mathcal K_{N,\theta}-\mathcal
K_{\theta}\bigg)z\|_{0,0} \le \sup\limits_{v \in \Omega}{|\mathcal
K_\theta(z(v))-\mathcal K_{N,\theta}(z(v))|} \le C_1 N^{-k_1}
\sup\limits_{v \in \Omega}{\bigg(\|\tilde
K(v,w(\theta))\|_{0,0,k_1}\|v Z(w(\theta))\|_{0,0}\bigg)},
\end{equation}
where norms $\|\tilde K(v,w(\theta))\|_{0,0,k_1}$ and $\|v
Z(w(\theta))\|_{0,0}$ are applied with respect to the variable
$\theta$.

According to Lemma \ref{l4} we have
\begin{equation}\label{cc7}
\Bigg|\bigg(\mathcal K_\theta,J_s^{0,0}\bigg)_{0,0}-\bigg(\mathcal
K_\theta,J_s^{0,0}\bigg)_{N,0,0}\Bigg| \le CN^{-k_2} \|\mathcal
K_\theta\|_{0,0,k_2} \|J_s^{0,0}\|_{0,0},~~s=0,1,...,N,
\end{equation}
where norm $\|\mathcal K_\theta\|_{0,0,k_2}$ is applied with respect
to the variable $v$. Using (\ref{cc7}) and the relations (\ref{9}),
(\ref{cc5}) we can yields
\begin{equation}\label{rv15}
\|\bigg(\mathcal I_N-\Pi_N\bigg)\mathcal K_\theta Z\|_{0,0} \to
0~~\text{as}~~N \to \infty.
\end{equation}

Substituting (\ref{25rv}) and (\ref{rv15}) in (\ref{rv14}) we can
conclude the first condition in (\ref{cc3}). Applying Lemma \ref{l4}
gives us the third condition in (\ref{cc3}). To complete the proof,
it is sufficient that we prove the second condition of (\ref{cc3}).
To this end, we apply Lemma \ref{rl1} with the assumptions
(\ref{cc1}). Since $\|y-\Pi_N y\|_{0,0} \to 0~\text{as}~N \to
\infty$ for $y \in L^2(\Omega)$(see[]), the second condition in
(\ref{cc3}) can be achieved by proving compactness of the operator
$\mathcal T$. Since $\tilde k(v,w)$ is a continuous kernel, then the
operator $\mathcal T$ will be compact if $\mathcal M^q(\bar
u):H_{0,0}^1(\Omega) \to L^2(\Omega)$ be a compact operator. For
this, define
$M=\big[\int_{0}^{1}{\int_{0}^{1}{\bigg(v^{\frac{1}{q}}-w^{\frac{1}{q}}\bigg)^{-2q}
dw}dv}\big]^{\frac{1}{2}}< \infty$. For $\bar u \in
H_{0,0}^1(\Omega)$, using Cauchy-Schwartz inequality we have
\begin{eqnarray*}
\|\mathcal M^q \bar u\|_{0,0}^2&=&
\int_{0}^{1}{|\int_{0}^{v}{\bigg(v^{\frac{1}{q}}-w^{\frac{1}{q}}\bigg)^{-q}
\bar u'(w) dw}|^2 dv} \le
\int_{0}^{1}\bigg({\int_{0}^{v}{|\bigg(v^{\frac{1}{q}}-w^{\frac{1}{q}}\bigg)^{-q} \bar u'(w)| dw}\bigg)^2 dv} \\
& \le &
\int_{0}^{1}\bigg({\int_{0}^{1}{|\bigg(v^{\frac{1}{q}}-w^{\frac{1}{q}}\bigg)^{-q}
\bar u'(w)| dw}\bigg)^2 dv} \le
\int_{0}^{1}{\bigg(\int_{0}^{1}{\bigg(v^{\frac{1}{q}}-w^{\frac{1}{q}}\bigg)^{-2q}dw}
\int_{0}^{1}{|\bar u'(w)|^2dw}\bigg)dv} \\
& \le & M^2 \|\bar u'\|_{0,0}^2 \le M^2 \|\bar
u\|_{H_{0,0}^1(\Omega)}^2.
\end{eqnarray*}

So $\mathcal M^q \bar u \in L^2(\Omega)$ and $\|\mathcal M^q\|=\sup
\bigg\{\frac{\|\mathcal M^q \bar u\|_{0,0}}{\|\bar
u\|_{H_{0,0}^1(\Omega)}}~~\Big|~~ \bar u \ne 0, \bar u \in
H_{0,0}^1(\Omega)\bigg\} \le M < \infty$. Therefore $\mathcal
M^q:H_{0,0}^{1}(\Omega) \to L^2(\Omega)$ defined by (\ref{rv4}), is
a bounded operator. If we proceed same as Theorem 3.4 in
[\ref{arr1}], we can conclude compactness of the operator $\mathcal
M^q(\bar u)$. Then the operator $\mathcal T$ is a compact operator
and from Lemma \ref{rl1} we have
\[
\|\mathcal T-\Pi_N \mathcal T\|_{0,0} \to 0~\text{as}~ N\to \infty
\]

In this position, all conditions in (\ref{cc3}) that are required to
deduce existence and uniqueness of solutions of the discrete
Galerkin system (\ref{17}) have been proved and then the proof is
completed.
\end{proof}
\section{Convergence analysis}
In this section, we will try to provide a reliable convergence
analysis which theoretically justify convergence of the proposed
discrete Galerkin method for the numerical solution of a special
case of (\ref{6x}) with $\bar p(v)=1$.

In the sequel, our discussion is based on these Lemmas:
\begin{lemma}\cite{aa26}\label{l2}For any $Z \in B_{0,0}^1(\Omega)$, we have
\[
\|\mathcal I_{N}Z\|_{0,0} \le
C\bigg(\|Z\|_{0,0}+N^{-1}\|Z'\|_{1,1}\bigg).
\]
\end{lemma}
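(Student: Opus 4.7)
The target inequality is a stability bound for the Legendre–Gauss interpolant $\mathcal I_N$ in the unweighted $L^2$--norm on $\Omega$, controlled by $\|Z\|_{0,0}$ plus a high-order correction $N^{-1}\|Z'\|_{1,1}$. My plan is to obtain it as a direct consequence of the interpolation error estimate already stated as Lemma \ref{l1}, by splitting $Z$ into its interpolant and the interpolation remainder.

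The first step is the trivial algebraic identity $\mathcal I_N Z = Z - (Z - \mathcal I_N Z)$, which after the triangle inequality for $\|\cdot\|_{0,0}$ yields
\[
\|\mathcal I_N Z\|_{0,0} \;\le\; \|Z\|_{0,0} + \|Z - \mathcal I_N Z\|_{0,0}.
\]
Next I would apply Lemma \ref{l1} with the choice $k=1$, which is admissible because the hypothesis $Z \in B_{0,0}^1(\Omega)$ matches exactly the regularity required there. This gives $\|Z - \mathcal I_N Z\|_{0,0} \le C N^{-1} |Z|_{0,0,1}$. Finally I would use the definition of the Jacobi-weighted semi-norm recorded at the start of Section 2, namely $|Z|_{\alpha,\beta,k} = \|Z^{(k)}\|_{\alpha+k,\beta+k}$, which with $(\alpha,\beta,k)=(0,0,1)$ reduces to $|Z|_{0,0,1} = \|Z'\|_{1,1}$. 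Inserting this into the previous display produces precisely the claimed bound, with $C$ the interpolation-error constant from Lemma \ref{l1}.

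There is essentially no hard step here: the argument is a textbook stability-from-approximability trick, and the only point worth being careful about is the bookkeeping of the weight indices in the non-uniformly weighted Sobolev semi-norm $|\cdot|_{0,0,1}$, so that the derivative term appears with the shifted weights $(1,1)$ rather than $(0,0)$. Since the Lemma is stated in the paper with this $B_{0,0}^1$ framework explicitly, this matching is automatic. Consequently the entire proof can be presented in two lines, and no compactness, quadrature, or projection-convergence machinery is required beyond the one invocation of Lemma \ref{l1}.
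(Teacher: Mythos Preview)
Your argument is correct: the triangle inequality plus Lemma~\ref{l1} with $k=1$ immediately gives the bound, and the identification $|Z|_{0,0,1}=\|Z'\|_{1,1}$ is exactly the definition recorded in Section~2. There is no gap.

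As for comparison with the paper: the paper does not actually prove this lemma; it is simply quoted from the reference \cite{aa26} (Shen--Tang--Wang). So your two-line derivation from Lemma~\ref{l1} is a genuine, self-contained proof where the paper only gives a citation. This is a mild improvement in that it shows the stability estimate is not an independent input but an immediate corollary of the interpolation error bound already in hand; on the other hand, the cited reference proves Lemma~\ref{l1} and this stability estimate together as part of a single interpolation-theory package, so nothing is really gained or lost either way.
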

\begin{lemma}\label{l5} The fractional integral operator $\mathcal
I^\mu$ is bounded in $L^2(\Omega)$ and
\[
\|\mathcal I^\mu Z\|_{0,0} \le C \|Z\|_{0,0},~~ Z \in L^2(\Omega).
\]
\end{lemma}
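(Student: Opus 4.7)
The plan is to exhibit $\mathcal I^\mu$ as a Volterra integral operator with kernel $k(x,s) = (x-s)^{\mu-1}/\Gamma(\mu)$ on the region $\{0 \le s \le x \le 1\}$ and bound its $L^2 \to L^2$ norm directly via Cauchy--Schwarz and Fubini. Since $\mu > 0$, the kernel is integrable in each variable separately, and that integrability is exactly what produces a finite constant $C$ in the statement.

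Concretely, I would proceed in two steps. First, for each fixed $x \in \Omega$, split the factor $(x-s)^{\mu-1} = (x-s)^{(\mu-1)/2}(x-s)^{(\mu-1)/2}$ inside the defining integral and apply Cauchy--Schwarz to obtain
\[
|\mathcal I^\mu Z(x)|^2 \le \frac{1}{\Gamma(\mu)^2} \left(\int_0^x (x-s)^{\mu-1}\,ds\right)\left(\int_0^x (x-s)^{\mu-1}|Z(s)|^2\,ds\right) = \frac{x^\mu}{\mu\,\Gamma(\mu)^2}\int_0^x (x-s)^{\mu-1}|Z(s)|^2\,ds.
\]
Second, I would integrate over $x \in \Omega$, apply Fubini's theorem to swap the order of integration, and bound the inner integral by $\int_s^1 x^\mu (x-s)^{\mu-1}\,dx \le \int_s^1 (x-s)^{\mu-1}\,dx = (1-s)^\mu/\mu \le 1/\mu$. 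Putting the pieces together yields $\|\mathcal I^\mu Z\|_{0,0} \le \|Z\|_{0,0}/\Gamma(\mu+1)$, which is the desired bound with the explicit constant $C = 1/\Gamma(\mu+1)$.

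Two alternative routes worth keeping in mind are the Schur test with constant weights $\phi = \psi \equiv 1$---whose row and column integrals of $k(x,s)$ are each bounded by $1/\Gamma(\mu+1)$, immediately giving the same estimate---and viewing $\mathcal I^\mu Z$ as the one-sided convolution of $Z$ (extended by zero) with $k(t) = t^{\mu-1}/\Gamma(\mu)\,\mathbf{1}_{[0,1]}(t) \in L^1(\mathbb R)$, then invoking Young's inequality $\|k * Z\|_{L^2} \le \|k\|_{L^1}\|Z\|_{L^2}$. The only potential obstacle is the diagonal singularity of the kernel when $\mu \in (0,1)$, but this is precisely tamed by the integrability $\int_0^x (x-s)^{\mu-1}\,ds = x^\mu/\mu < \infty$, which legitimises both the Cauchy--Schwarz split and the application of Fubini; no deeper ingredient is required.
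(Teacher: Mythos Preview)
Your argument is correct: the Cauchy--Schwarz split $(x-s)^{\mu-1}=(x-s)^{(\mu-1)/2}(x-s)^{(\mu-1)/2}$ followed by Fubini is the standard way to establish this bound, and your computation yields the explicit constant $C=1/\Gamma(\mu+1)$. The alternative routes you mention (Schur test, Young's inequality for convolutions) are equally valid and give the same constant.

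There is nothing to compare against here: the paper states this lemma without proof and without citation. Your write-up therefore supplies an argument the paper simply omits.
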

\begin{lemma}\label{l6} \cite{r6}(Gronwall inequality)Assume that $Z(v)$ is a
non-negative, locally integrable function defined on $\Omega$ which
satisfies
\[
Z(v) \le b(v)+B \int_{0}^{v}{(v-w)^\alpha w^\beta Z(w) dw}, \quad w
\in \Omega,
\]
where $\alpha, \beta>-1$,~$b(v) \ge 0$ and $B \ge 0$. Then, there
exist a constant $C$ such that
\[
Z(v) \le b(v)+ C \int_{0}^{v}{(v-w)^\alpha w^\beta b(w) dw}, \quad w
\in \Omega.
\]
\end{lemma}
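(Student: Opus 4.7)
The plan is to prove this weakly singular Gronwall-type inequality by Picard iteration of the integral inequality, and then to control the iterated kernels with Beta-function estimates so that all higher-order terms can be absorbed into a single multiple of $\int_0^v (v-w)^\alpha w^\beta b(w)\,dw$. Define the positive linear operator $(T\phi)(v) = B\int_0^v (v-w)^\alpha w^\beta \phi(w)\,dw$ on nonnegative locally integrable functions; since $\alpha,\beta>-1$, its kernel is integrable and $T$ is well-defined and monotone. The assumption reads $Z \le b + TZ$, and iterating monotonically gives
\[
Z(v) \le \sum_{n=0}^{N-1} T^n b(v) + T^N Z(v), \qquad N \ge 1.
\]

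First I would establish by induction that the iterated kernel $K_n(v,w)$ of $T^n$ satisfies a bound of the form $K_n(v,w) \le C_n (v-w)^{n(\alpha+1)-1} w^\beta$, where the constants $C_n$ decay factorially like $1/\Gamma(n(\alpha+1))$. The inductive step uses Fubini together with the substitution $s = w + (v-w)t$, $t\in[0,1]$, which reduces the single-step kernel integral to a Beta integral $\int_0^1(1-t)^\alpha t^{\alpha}\,dt = B(\alpha+1,\alpha+1)$, plus a factor $(v-w)^{2\alpha+1}$. The $w^\beta$ weight then has to be handled by a case split: if $\beta \ge 0$, then $(w + (v-w)t)^\beta \le v^\beta$ on the integration domain, which loses nothing substantial on $\Omega = [0,1]$; if $-1 < \beta < 0$, I would bound $(w+(v-w)t)^\beta \le w^\beta$ and reabsorb the surplus power of $w^\beta$ into $(v-w)^\alpha$ using $v \le 1$, or equivalently use the symmetric substitution exchanging the roles of $\alpha$ and $\beta$.

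Next, since $\Omega = [0,1]$ is bounded and $\alpha+1>0$, for $n\ge 1$ I can factor $(v-w)^{n(\alpha+1)-1} = (v-w)^\alpha \cdot (v-w)^{(n-1)(\alpha+1)} \le (v-w)^\alpha$, so that
\[
T^n b(v) \le \hat C_n \int_0^v (v-w)^\alpha w^\beta b(w)\,dw,
\]
with $\hat C_n$ satisfying $\sum_{n\ge 1}\hat C_n < \infty$ by the factorial decay inherited from the Gamma-function denominators (essentially a Mittag-Leffler tail estimate). Summing this series produces the desired constant $C = \sum_{n\ge 1}\hat C_n$.

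Finally, I would argue that the remainder $T^N Z(v)$ tends to $0$ as $N\to\infty$ uniformly on $\Omega$, using the same factorial decay together with the local integrability of $Z$, so the partial-sum inequality passes to the limit and yields $Z(v) \le b(v) + C\int_0^v(v-w)^\alpha w^\beta b(w)\,dw$. The main obstacle I anticipate is the case $\beta<0$ in the iterated-kernel estimate: the standard symmetric estimate used for pure $(v-w)^\alpha$ kernels does not apply verbatim, and care is required to ensure the constants $\hat C_n$ retain their factorial decay rather than accumulate powers of $w^{-|\beta|}$ that would destroy summability. This is the reason the result is typically packaged as in \cite{r6} rather than re-derived from scratch, and in a formal write-up I would follow that reference for the bookkeeping of constants.
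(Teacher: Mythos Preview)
The paper does not prove this lemma at all: it is quoted from \cite{r6} (Chen--Tang) and used as a black box in the proof of Theorem~\ref{t1}, so there is no in-paper argument to compare your sketch against. Your Picard-iteration framework is the standard route to weakly singular Gronwall estimates, and for $\beta\ge 0$ the outline you give goes through essentially as written.

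The difficulty you flag for $-1<\beta<0$ is real, and neither of the two patches you propose actually closes it. Bounding $(w+(v-w)t)^\beta\le w^\beta$ produces an extra factor $w^\beta$ at each iteration step, and there is no way to ``reabsorb'' $w^\beta$ into a power of $v-w$, since these two quantities vary independently on the triangle $0<w<v<1$; the ``symmetric substitution'' you allude to does not help for the same reason. More decisively, take $\alpha=\beta=-\tfrac12$: then $\int_0^v(v-w)^{-1/2}w^{-1/2}\,dw=B(\tfrac12,\tfrac12)=\pi$ for every $v\in(0,1]$, so $T\mathbf 1\equiv B\pi$ and hence $T^n\mathbf 1=(B\pi)^n$, which does not decay. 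In fact for $B\ge 1/\pi$ the pair $Z\equiv 1$, $b\equiv 0$ satisfies the hypothesis of the lemma but violates its conclusion. Thus the iteration cannot converge without an additional restriction (effectively $\alpha+\beta+1>0$, which is what forces $T^n\mathbf 1\sim v^{n(\alpha+\beta+1)}$ to shrink), and the lemma as literally stated here is too strong. Your instinct to defer to \cite{r6} for the precise hypotheses and constants is therefore the correct one; the sketch as written cannot be completed in the full range $\alpha,\beta>-1$.
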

Now, we state and prove the main result of this section regarding
the error analysis of the proposed method for the numerical solution
of a special case of (\ref{6x}) with $\bar p(v)=1$.
\begin{theorem}\label{t1}
(Convergence)Let $u(x)$ and $\bar u(v)$ are the exact solutions of
the equations (\ref{1}) and (\ref{6x}) respectively that is related
by $u(x)=\bar u(v)$. Assume that $u_N(x)=\bar u_N(v)$ be the
approximate solution of (\ref{1}), where $\bar u_N(v)$ is the
discrete Galerkin solution of the transformed equation (\ref{6x}).
If the following conditions are fulfilled
\begin{enumerate}
  \item $\bar f(v) \in B_{0,0}^{k_1}(\Omega)$ for $k_1 \ge 1$,
  \item $\mathcal K(\bar u) \in B_{0,0}^{k_2}(\Omega)$ for $k_2 \ge 1$,
\end{enumerate}
then for sufficiently large $N$ we have
\[
\|e_N(u)\|_{0,0} \le C N^{-\xi}\bigg(|\bar f|_{0,0,k_1}+|\mathcal
K(\bar u)|_{0,0,k_2}\bigg)
\]
where $\xi=\min\{k_1,k_2\}$ and $e_N(u)= u(x)-u_N(x)$ denotes the
error function.
\end{theorem}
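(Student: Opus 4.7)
The plan is to reduce the estimate to the transformed coordinate and then bound $\|e_N(\bar u)\|_{0,0}$. Under the substitution $x=v^{1/q}$ with $q<1/2$, the Jacobian $\frac{1}{q}v^{1/q-1}$ is uniformly bounded on $\Omega$, so $\|e_N(u)\|_{0,0}\le C_q\|e_N(\bar u)\|_{0,0}$ and it suffices to work in the $v$ variable. The overall strategy is to derive an error equation from the operator forms already established in Section~4, bound the residual pieces with the interpolation and quadrature lemmas of Sections~2 and~5, and close the estimate via Gronwall's inequality.

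First I would subtract the discrete equation (\ref{rv9c}), $\bar u_N = \mathcal I_N\mathcal T_N\bar u_N - \mathcal I_N\bar f$, from the exact equation $\bar u=\mathcal T\bar u-\bar f$, which is (\ref{6x}) with $\bar p\equiv 1$ rearranged. Writing $\mathcal I_N = \Pi_N+(\mathcal I_N-\Pi_N)$ and $\mathcal T_N = \mathcal T+(\mathcal T_N-\mathcal T)$ and collecting terms, one arrives at an identity of the form
\begin{equation*}
e_N(\bar u) = (I-\Pi_N)\bar u + \Pi_N\mathcal T\, e_N(\bar u) + \mathcal R_N,
\end{equation*}
where $\mathcal R_N$ gathers $(I-\mathcal I_N)\bar f$, $\mathcal I_N(\mathcal K_\theta-\mathcal K_{N,\theta})\bar u_N$, and $(\mathcal I_N-\Pi_N)\mathcal K_\theta\bar u$.

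Each piece of $\mathcal R_N$ is then estimated by the lemmas at hand. Lemma~\ref{l1} applied to $(I-\mathcal I_N)\bar f$ and to the $\Pi_N$ projection of $\mathcal K(\bar u)$ delivers the $N^{-k_1}|\bar f|_{0,0,k_1}$ and $N^{-k_2}|\mathcal K(\bar u)|_{0,0,k_2}$ contributions respectively, the Gauss quadrature error $\mathcal K_\theta-\mathcal K_{N,\theta}$ is controlled by Lemma~\ref{l4}, and Lemma~\ref{l3} is used to absorb $\mathcal I_N$ acting on continuous functions. Summing produces $\|\mathcal R_N\|_{0,0}\le CN^{-\xi}(|\bar f|_{0,0,k_1}+|\mathcal K(\bar u)|_{0,0,k_2})$.

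To close the estimate I would apply the fractional integral $\mathcal I^q$, interpreted in the transformed coordinate, to the $\Pi_N\mathcal T\, e_N$ term, exploiting the identity $\mathcal I^q\mathcal M^q e_N = e_N-e_N(0)= e_N$ since $e_N(0)=0$ by the basis choice $\{G_i^{0,-1}\}$. This converts the residual identity into a pointwise Volterra-type inequality
\begin{equation*}
|e_N(\bar u)(v)| \le b_N(v) + C\int_0^v (v-w)^{\alpha}w^{\beta}|e_N(\bar u)(w)|\,dw,
\end{equation*}
with $\|b_N\|_{0,0}\le CN^{-\xi}(|\bar f|_{0,0,k_1}+|\mathcal K(\bar u)|_{0,0,k_2})$. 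Lemma~\ref{l6} (Gronwall) then yields the stated bound on $\|e_N(\bar u)\|_{0,0}$, which transfers to $\|e_N(u)\|_{0,0}$ via the $v\to x$ change of variables. The main obstacle is the nonlocality of $\mathcal M^q$: it is not itself a Volterra integral operator, so Gronwall cannot be applied directly to the operator-form error equation. Converting $\Pi_N\mathcal T e_N$ into a Volterra convolution through the appropriate fractional integration, while keeping every truncation error dominated by $N^{-\xi}$ and correctly tracking the $w^{1/q-1}$ weight introduced by the change of variables inside $\mathcal M^q$ and $\mathcal I^q$, is the delicate bookkeeping step.
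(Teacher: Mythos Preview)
Your overall strategy---derive an error equation, invert $\mathcal M^q$ with the transformed fractional integral, apply Gronwall, and bound the remaining pieces with Lemmas~\ref{l1}, \ref{l3}, \ref{l4}---is exactly the paper's. The difficulty is in the specific error equation you write down. By splitting $\mathcal I_N=\Pi_N+(\mathcal I_N-\Pi_N)$ you produce the term $(I-\Pi_N)\bar u$ and leave $\Pi_N\mathcal T e_N$ on the right. Both cause trouble. The projection residual $(I-\Pi_N)\bar u$ can only be bounded by $CN^{-k}|\bar u|_{0,0,k}$, which requires a regularity hypothesis on $\bar u$ itself that the theorem does not assume; the stated bound involves only $|\bar f|_{0,0,k_1}$ and $|\mathcal K(\bar u)|_{0,0,k_2}$. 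More seriously, the inversion identity you want to use is $\mathcal M_1^q\mathcal M^q e_N=e_N$, but your right-hand side contains $\Pi_N\mathcal M^q e_N$, and $\mathcal M_1^q$ does not commute with $\Pi_N$, so you cannot turn $\Pi_N\mathcal T e_N$ into a clean Volterra term by applying $\mathcal M_1^q$.

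The paper sidesteps both issues by never introducing $\Pi_N$. The key observation is that $\mathcal M^q$ maps $\mathcal P_N$ into $\mathcal P_{N-1}$ (equation~(\ref{15x})), so $\mathcal I_N\mathcal M^q\bar u_N=\mathcal M^q\bar u_N$ exactly. Hence (\ref{rv9c}) can be rewritten as $\mathcal M^q\bar u_N-\bar u_N-\lambda\,\mathcal I_N\mathcal K_{N,\theta}(\bar u_N)=\mathcal I_N\bar f$, and subtracting this from the exact equation gives
\[
\mathcal M^q\bar e_N=\bar e_N+\lambda\,\mathcal K(\bar e_N)+e_{\mathcal I_N}(\bar f)+\lambda\,e_{\mathcal I_N}\big(\mathcal K(\bar u_N)\big)+\lambda\,\mathcal I_N\big(\mathcal K_\theta(\bar u_N)-\mathcal K_{N,\theta}(\bar u_N)\big),
\]
with $\mathcal M^q\bar e_N$ isolated on the left and no projector in sight. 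Now $\mathcal M_1^q$ applied to both sides yields $\bar e_N$ on the left and a genuine weakly singular Volterra term $\mathcal M_1^q\bar e_N$ (plus $\mathcal M_1^q$ of $\mathcal K(\bar e_N)$) on the right, so Gronwall (Lemma~\ref{l6}) applies directly. The passage from the $v$-norm back to $\|e_N(u)\|_{0,0}$ is done through the weighted identity $\|\bar e_N\|_{0,\frac1q-1}^2=q\,2^{\frac1q-1}\|e_N\|_{0,0}^2$ together with Lemma~\ref{l5}, rather than through your Jacobian bound; this is what allows the $\mathcal M_1^q$-images of the residual terms to be estimated in the same norm as the interpolation errors.
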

\begin{proof}

Since $\mathcal M^q \bar u_N$ is a polynomial from degree of at most
$N$ then we can rewrite (\ref{rv9c}) as
\begin{equation}\label{7x}
\mathcal M^q \bar u_N-\bar u_N(v)-\mathcal I_N\bigg(\lambda \mathcal
\mathcal K_{N,\theta}(\bar u_N)\bigg)=\mathcal I_N \bar f.
\end{equation}

Subtracting (\ref{6x}) from (\ref{7x}) and some simple manipulations
we can obtain
\begin{equation}\label{19}
\mathcal M^q \bar e_N=\bar e_N(v)+e_{\mathcal I_N}(\bar f)+\lambda
\mathcal K(\bar e_N)+\lambda e_{\mathcal I_N}(\mathcal K(\bar
u_N))+\lambda \mathcal I_N\bigg(\mathcal K_{\theta}(\bar
u_N)-\mathcal K_{N,\theta}(\bar u_N)\bigg),
\end{equation}
where $e_{\mathcal I_N}(g)=g-\mathcal I_N(g)$ denotes the
interpolating error function and $\bar e_N(v)=\bar u(v)-\bar u_N(v)$
is the error function of approximating (\ref{6x}) using discrete
Galerkin solution $\bar u_N(v)$.

Applying the transformation (\ref{6xx}) to the operator $\mathcal
I^q u$ we get
\[
\mathcal M_1^q \bar
u=\frac{1}{\Gamma{(q)}}\int\limits_0^v{\bigg(v^{\frac{1}{q}}-w^{\frac{1}{q}}\bigg)^{q-1}
\bar u(w) \frac{w^{\frac{1}{q}-1}}{q}dw}.
\]

Following (\ref{20}) it is easy to check that
\begin{equation}\label{7xx}
\mathcal M_1^q \big(\mathcal M^q \bar u(v)\big)=\bar u(v)-\bar u(0).
\end{equation}

Applying the operator $\mathcal M_1^q$ on the both sides of
(\ref{19}) and using (\ref{7xx}) we can yield
\begin{equation}\label{rv1}
|\bar e_N(v)| \le  \mathcal M_1^q(|\bar e_N|+|\lambda \mathcal
K(\bar e_N)|)+\bigg|\mathcal M_1^q\bigg(e_{\mathcal I_N}(\bar
f)+\lambda e_{\mathcal I_N}(\mathcal K(\bar u_N))+\lambda \mathcal
I_N\bigg(\mathcal K_{\theta}(\bar u_N)-\mathcal K_{N,\theta}(\bar
u_N)\bigg)\bigg)\bigg|.
\end{equation}

Since
\begin{eqnarray}\label{rv2}
\nonumber\mathcal M_1^q \int\limits_{0}^{v}{|\tilde K(v,w) \bar
e_N(w)|
dw}&=&\frac{1}{\Gamma{(q)}}\int\limits_{0}^{v}{\int\limits_{0}^{w}{\big(v^{\frac{1}{q}}-w^{\frac{1}{q}}\big)^{q-1}
\frac{w^{\frac{1}{q}-1}}{q} |\tilde K(w,s)||\bar e_N(s)|ds}dw}\\
\nonumber&=&\frac{1}{\Gamma{(q)}}\int\limits_{0}^{v}{\int\limits_{s}^{v}{\big(v^{\frac{1}{q}}-w^{\frac{1}{q}}\big)^{q-1}
\frac{w^{\frac{1}{q}-1}}{q} |\tilde K(w,s)||\bar e_N(s)|dw}ds}\\
&=&\frac{1}{\Gamma{(q)}}\int\limits_{0}^{v}{\tilde K_1(v,s) |\bar
e_N(s)|ds}\le \frac{\|\tilde
K_1(v,s)\|_\infty}{\Gamma{(q)}}\int\limits_{0}^{v}{|\bar e_N(s)|ds},
\end{eqnarray}
then (\ref{rv1}) can be rewritten as
\begin{eqnarray}\label{rv3}
\nonumber|\bar e_N(v)| &\le &
\frac{1}{\Gamma{(q)}}\bigg(\int\limits_{0}^{v}{\big(v^{\frac{1}{q}}-w^{\frac{1}{q}}\big)^{q-1}
\frac{w^{\frac{1}{q}-1}}{q}|\bar e_N(w)|dw}+|\lambda|\|\tilde
K_1(v,s)\|_\infty \int\limits_{0}^{v}{|\bar e_N(w)|
dw}\bigg)\\
\nonumber &+&\bigg|\mathcal M_1^q\bigg(e_{\mathcal I_N}(\bar
f)+\lambda e_{\mathcal I_N}(\mathcal K(\bar u_N))+\lambda \mathcal
I_N\bigg(\mathcal K_{\theta}(\bar u_N)-\mathcal K_{N,\theta}(\bar
u_N)\bigg)\bigg)\bigg|\\ &\le&  \tilde C
\bigg(\int\limits_{0}^{v}{\big(v^{\frac{1}{q}}-w^{\frac{1}{q}}\big)^{q-1}
|\bar e_N(w)|dw}\bigg)\\ \nonumber &+&\bigg|\mathcal
M_1^q\bigg(e_{\mathcal I_N}(\bar f) +\lambda e_{\mathcal
I_N}(\mathcal K(\bar u_N))+\lambda \mathcal I_N\bigg(\mathcal
K_{\theta}(\bar u_N)-\mathcal K_{N,\theta}(\bar
u_N)\bigg)\bigg)\bigg|,
\end{eqnarray}
where
\[\tilde K_1(v,s)=\int\limits_{s}^{v}{\big(v^{\frac{1}{q}}-w^{\frac{1}{q}}\big)^{q-1}
\frac{w^{\frac{1}{q}-1}}{q} |\tilde K(w,s)|dw},\quad \tilde
C=\max\bigg\{\frac{1}{q \Gamma{(q)}},\frac{|\lambda|\|\tilde
K_1(v,s)\|_\infty }{\Gamma{(q)}}\bigg\}
\|1+\big(v^{\frac{1}{q}}-w^{\frac{1}{q}}\big)^{1-q}\|_\infty.\]

Using Gronwall inequality(Lemma \ref{l6}) in (\ref{rv3}) yields
\begin{equation}\label{21c}
\|\bar e_N\|_{0,\frac{1}{q}-1} \le C_1 \bigg(\|\mathcal
M_1^q\bigg(e_{\mathcal I_N}(\bar f)+\lambda e_{\mathcal
I_N}(\mathcal K(\bar u_N))+\lambda \mathcal I_N\bigg(\mathcal
K_{\theta}(\bar u_N)-\mathcal K_{N,\theta}(\bar
u_N)\bigg)\bigg)\|_{0,\frac{1}{q}-1}\bigg).
\end{equation}

It can be checked that by applying the transformation (\ref{6}) we
obtain \begin{equation}\label{cc8}\|\bar
e_N(v)\|_{0,\frac{1}{q}-1}^2=q
2^{\frac{1}{2}-1}\|e_N(x)\|_{0,0}^2.\end{equation}

On the other hand, by applying (\ref{6}) and Lemma \ref{l5}  we can
get
\begin{equation}\label{cc4}
\|\mathcal M_1^q \bar Z(v)\|_{0,\frac{1}{q}-1}^2=q
2^{\frac{1}{q}-1}\|\mathcal I^q Z(x)\|_{0,0}^2 \le C q
2^{\frac{1}{q}-1}\|Z(x)\|_{0,0}^2=C \|\bar
Z(v)\|_{0,\frac{1}{q}-1}^2,
\end{equation}
where $Z(x)$ is a given function and $\bar Z(v)=z(v^{\frac{1}{q}})$.
Using the relations (\ref{cc8}) and (\ref{cc4}) in (\ref{21c}) we
can obtain
\begin{eqnarray}\label{21cc}
\sqrt{q 2^{\frac{1}{q}-1}}\|e_N\|_{0,0} &\le & C_2 \| e_{\mathcal
I_N}(\bar f)+\lambda e_{\mathcal I_N}(\mathcal K(\bar u_N))+\lambda
\mathcal I_N\bigg(\mathcal K_{\theta}(\bar u_N)-\mathcal
K_{N,\theta}(\bar u_N)\bigg)\|_{0,\frac{1}{q}-1} \\
\nonumber&\le & C_2 \bigg(\| e_{\mathcal I_N}(\bar
f)\|_{0,\frac{1}{q}-1}+|\lambda|\|e_{\mathcal I_N}(\mathcal K(\bar
u_N))\|_{0,\frac{1}{q}-1}+|\lambda|\|\mathcal I_N\bigg(\mathcal
K_{\theta}(\bar u_N)-\mathcal K_{N,\theta}(\bar
u_N)\bigg)\|_{0,\frac{1}{q}-1}\bigg).
\end{eqnarray}

Since $\delta^{0,\frac{1}{q}-1} \le \delta^{0,0}$ then we have
$\|.\|_{0,\frac{1}{q}-1} \le \|.\|_{0,0}$ and thereby (\ref{21cc})
can be written as
\begin{equation}\label{21}
\|e_N\|_{0,0} \le C_3 \bigg(\mathcal L_1+\mathcal L_2+\mathcal
L_3\bigg),
\end{equation}
where
\[
\mathcal L_1=\|e_{\mathcal I_N}(\bar f)\|_{0,0}, \quad \mathcal
L_2=\|e_{\mathcal I_N}(\mathcal K(\bar u_N))\|_{0,0}, \quad \mathcal
L_3=\|\mathcal I_N\big(\mathcal K_{\theta}(\bar u_N)-\mathcal
K_{N,\theta}(\bar u_N)\big)\|_{0,0}. \]

Now, it is sufficient to find suitable upper bounds to the above
norms. According to Lemma \ref{l1} we have
\begin{eqnarray}\label{22}
\mathcal L_1&=&\|e_{\mathcal I_N}(\bar f)\|_{0,0} \le C_3 N^{-k_1}
|\bar f|_{0,0,k_1},\\
\nonumber \\
\nonumber\mathcal L_2&=&\|e_{\mathcal I_N}(\mathcal K(\bar
u_N))\|_{0,0} \le C_4 N^{-k_2}|\mathcal K(\bar u-\bar
e_N(u))|_{0,0,k_2}.
\end{eqnarray}

Using Lemmas \ref{l3} and \ref{l4} we can obtain
\begin{eqnarray}\label{25}
\nonumber \mathcal L_3=\|\mathcal I_N\big(\mathcal K_\theta(\bar
u_N)-\mathcal K_{N,\theta}(\bar u_N)\big)\|_{0,0} &\le &
\sup\limits_{v \in
\Omega}{|\mathcal K_\theta(\bar u_N)-\mathcal K_{N,\theta}(\bar u_N)|}\\
\nonumber &\le & C_7 N^{-k_3} \sup\limits_{v \in
\Omega}{\bigg(\|\tilde
K(v,w(\theta))\|_{0,0,k_3}\|v \bar u_N(w(\theta))\|_{0,0}\bigg)}\\
\nonumber &\le & C_7 N^{-k_3}\bigg(\sup\limits_{v \in
\Omega}{\|\tilde
K(v,w(\theta))\|_{0,0,k_3}\bigg)}\|\bar u_N(w)\|_{0,0}\\
&\le & C_7 N^{-k_3} \bigg(\sup\limits_{v \in \Omega}{\|\tilde
K(v,w(\theta))\|_{0,0,k_3}\bigg)}\bigg(\|\bar u\|_{0,0}+\|\bar
e_N\|_{0,0}\bigg),
\end{eqnarray}
where norms $\|\tilde K(v,w(\theta))\|_{0,0,k_3}$ and $\|v \bar
u_N(w(\theta))\|_{0,0}$ are applied with respect to the variable
$\theta$.

For sufficiently large $N$ the desired result can be concluded by
inserting (\ref{22}) and (\ref{25}) in (\ref{21}).
\end{proof}

\section{Numerical Results}
In this section we apply a program written in Mathematica to a
numerical example to demonstrate the accuracy of the proposed method
and effectiveness of applying GJPs. the "Numerical Error" always
refers to the $L^2$-norm of the obtained error function.
\begin{example}\label{e1}Consider the following FIDE
\[\mathcal D^{\frac{1}{2}}u(x)=u(x)+f(x)+\frac{1}{2}\int\limits_0^x{\sqrt{x t} u(t)
dt},\quad u(0)=0,\] with the exact solution
$u(x)=\frac{\sin{x}}{\sqrt{x}}$ and
\[
f(x) =\frac{-x+\sqrt{x}\bigg(\sqrt{x}
\cos{x}+\sqrt{\pi}\bigg(J_0(\frac{x}{2})\cos{\frac{x}{2}}-J_1(\frac{x}{2})\sin{\frac{x}{2}}\bigg)\bigg)-2
\sin{x}}{2 \sqrt{x}}.
\]

Here $J_n(z)$ gives the Bessel function of the first kind.
\end{example}
This example has a singularity at the origin, i.e.,
\[
u'(x) \sim \frac{1}{\sqrt{x}}.
\]

In the theory presented in the previous section, our main concern is
the regularity of the transformed solution. For the present problem
by applying coordinate transformation $x=v^2$, the infinitely smooth
solution
\[
\bar u(v)=u(v^2)=\frac{\sin{v^2}}{v},
\]
is obtained. The main purpose is to check the convergence behavior
of the numerical solutions with respect to the approximation degree
$N$. Numerical results obtained are given in the Table 1 and Figure
1. As expected, the errors show an exponential decay, since in this
semi-log representation(Figure 1) one observes that the error
variations are essentially linear versus the degrees of
approximation.
\begin{center} \centerline{\footnotesize Table $1$:
The numerical errors of example \ref{e1}. } \vspace{2mm}
\begin{tabular}{ cccc }
\hline
N \hspace{1. cm}  & \hspace{-.7 cm} {\small  Numerical Errors}\\
\hline\hline
2\hspace{1. cm} &\hspace{-.7 cm}$7.86 \times 10^{-3}$\\
4\hspace{1. cm} &\hspace{-.7 cm}$4.71 \times 10^{-5}$\\
6\hspace{1. cm} &\hspace{-.7 cm}$2.15 \times 10^{-6}$\\
8\hspace{1. cm} &\hspace{-.7 cm}$1.05 \times 10^{-8}$\\
10\hspace{1. cm} &\hspace{-.7 cm}$4.42 \times 10^{-10}$\\
12\hspace{1. cm} &\hspace{-.7 cm}$2.05 \times 10^{-12}$\\
14\hspace{1. cm} &\hspace{-.7 cm}$2.64 \times 10^{-14}$\\
16\hspace{1. cm} &\hspace{-.7 cm}$1.91 \times 10^{-16}$\\
\hline\\
\end{tabular}
\end{center}

\begin{figure}[!h]
\begin{center}
\input{epsf}
\epsfxsize=2.8 in \epsfysize=2.8 in \epsffile{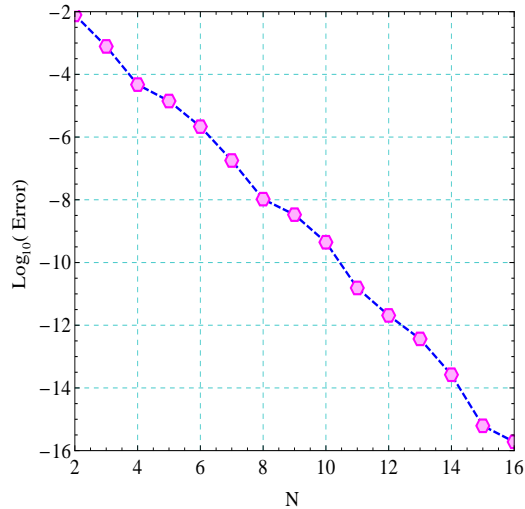}
\caption{The numerical errors of example \ref{e1} with various
values of N.}
\end{center}
\end{figure}

\newpage

\end{document}